\documentclass[mn,a4paper,fleqn%
]{w-art}
\usepackage{times,cite,w-thm}
\usepackage{amssymb}
\theoremstyle{plain}

\theoremstyle{definition}

\usepackage[]{graphicx}
\numberwithin{equation}{section}

\newcommand{\rn}{{\mathbb R}^n}
\newcommand{\rnp}{{\mathbb R}^n_+}

\newcommand{\comega}{\overline\Omega }

\newcommand{\simto}{\overset\sim\rightarrow}

\begin{document}
\DOIsuffix{theDOIsuffix}
\Volume{248}
\Month{01}
\Year{2007}
\pagespan{1}{}
\Receiveddate{XXXX}
\Reviseddate{XXXX}
\Accepteddate{XXXX}
\Dateposted{XXXX}
\keywords{}
\subjclass[msc2000]{35P99, 35S15, 47G30}%



\title[ Regularity of spectral problems ]{Regularity of spectral fractional Dirichlet and Neumann
problems}



\author[G. Grubb]{Gerd Grubb
\footnote{Corresponding author\quad E-mail:~\textsf{grubb@math.ku.dk}, Phone +45\, 3532\,0743  Fax  +45\, 3532\,0704 .}}
\address
{Department of Mathematical Sciences,
University of Copenhagen, 
Universitetsparken 5,
DK-2100 K\o{}benhavn,  
Denmark}

\begin{abstract}

Consider the fractional powers $(A_{\operatorname{Dir}})^a$ and
$(A_{\operatorname{Neu}})^a$ of the Dirichlet and  Neumann
realizations  of a second-order strongly elliptic differential
operator $A$ on a smooth bounded subset $\Omega $ of
${\mathbb R}^n$.  Recalling the results on complex powers and complex
interpolation of domains of elliptic boundary value problems by Seeley
in the 1970's, we demonstrate how they imply regularity properties in
full scales of  $H^s_p$-Sobolev spaces and
H\"older spaces, for the solutions  of the
associated equations.
Extensions to nonsmooth situations for low values of $s$ are derived
by use of recent results on $H^\infty $-calculus.
We also include an overview of the various Dirichlet- and
Neumann-type boundary problems associated with the fractional Laplacian.

\end{abstract}
\maketitle                   






\section{Introduction}\label{secIntro}

There is currently a great interest in fractional powers of the
Laplacian  $(-\Delta )^a$
 on ${\mathbb R}^n$, $a>0$,  and derived operators associated with a
 subset of ${\mathbb R}^n$. The fractional Laplacian $(-\Delta )^a$ can be described as the pseudodifferential
operator 
\begin{equation}u\mapsto (-\Delta )^au=\mathcal F^{-1}(|\xi |^{2a}\hat u(\xi
))=\operatorname{Op}(|\xi |^{2a})u,\label{1.1}
\end{equation}
with symbol $|\xi |^{2a}$, see also \eqref{6.1} below. Let $\Omega $ be a bounded $C^\infty $-smooth subset of
${\mathbb R}^n$. Since $(-\Delta )^a$ is nonlocal, it is not obvious how
to define boundary value problems for it on $\Omega $, and in fact
there are several interesting choices.

One choice for a Dirichlet realization on $\Omega $ is to take the power $(-\Delta _{\operatorname{Dir}})^a$ defined from the Dirichlet
realization $-\Delta _{\operatorname{Dir}}$ of $-\Delta $ by spectral theory in
the Hilbert space $L_2(\Omega )$; let us 
call it ``the spectral Dirichlet
fractional Laplacian'', following a suggestion of Bonforte, Sire and
Vazquez \cite{BSV14}.

Another very natural choice is to take   the
Friedrichs extension of the operator $r^+(-\Delta )^a|_{C_0^\infty
(\Omega )}$ (where $r^+$
denotes restriction to $\Omega $); let us denote it $(-\Delta
)^a_{\operatorname{Dir}}$ and call it ``the restricted Dirichlet fractional
Laplacian'', following \cite{BSV14}. 

Both choices enter in nonlinear PDE;  $(-\Delta
)^a_{\operatorname{Dir}}$ is moreover important in probability theory.
 The operator $-\Delta $
can be replaced by a variable-coefficient strongly elliptic second-order
operator $A$ (not necessarily symmetric).

For the restricted Dirichlet fractional Laplacian, detailed regularity properties of
solutions of  $(-\Delta
)^a_{\operatorname{Dir}}u=f$ in H\"older spaces and $H^s_p$ Sobolev spaces have just recently been shown, in Ros-Oton
and Serra \cite{RS14,RS15a,RS15b}, Grubb 
\cite{G15,G14}. 

For the spectral Dirichlet fractional Laplacian, regularity properties in 
$H^s_p$-spaces have been
known for many years, as a consequence of Seeley's work
\cite{S71,S72}; we shall account for this below in Sections \ref{secSeeley} and \ref{secDir}. 
Further results have recently been presented by Caffarelli and Stinga in
\cite{CS14}, treating domains with limited smoothness
 and obtaining certain H\"older
estimates of Schauder type. See also Cabr\'e{} and Tan \cite{CT10}
Th.\ 1.9, for
the case $a=\frac12$.

In Section \ref{secNeu} we show how similar regularity properties of the spectral Neumann fractional
Laplacian $(-\Delta _{\operatorname{Neu}})^a$ follow from Seeley's
results. Also for this case,
\cite{CS14} has recently  shown H\"older estimates of Schauder type under weaker smoothness
hypotheses. 

In Section \ref{secFurther}, we first briefly discuss extensions to more general scales of function
spaces. Next, for generalizations to nonsmooth domains, we show how
a recent result of Denk, Dore, Hieber, Pr\"uss and Venni
\cite{DDHPV04}, on the existence of $H^\infty $-calculi for boundary
problems, can be combined with more recent results of Yagi
\cite{Y08,Y10}, to extend the regularity properties of Sections \ref{secDir} and \ref{secNeu}
to suitable nonsmooth situations for  small $s$, leading to new results.

Finally, Section \ref{secOver} gives a brief overview of the many kinds of boundary problems
associated with $(-\Delta )^a$, expanding the references given above. This includes several other
Neumann-type problems. 

\medskip
A primary
 purpose of the present note is to put forward some direct
consequences of Seeley \cite{S71,S72} for the spectral fractional
Laplacians. 
One of the main results is that when $A$ is second-order strongly
elliptic and
 $B$ stands for either a Dirichlet or a Neumann
 condition, and $0<a<1$, then for solutions of 
\begin{equation}
(A_B)^au=f,\label{1.2}
\end{equation}
  $f\in H^s_p(\Omega )$ for an $s\ge 0$ implies $u\in
 H^{s+2a}_p(\Omega )$ {\it if and only if} $f$ itself satisfies all those
 boundary conditions of the form $BA^kf=0$ ($k\in {\mathbb N}_0$) that
 have a meaning on $H^s_p(\Omega )$. Consequences are also drawn for
 $C^\infty $-solutions and for solutions where $f$ is in $L_\infty
 (\Omega )$ or a H\"older space. We think this is of interest not just
 as a demonstration of early results, but also in showing how far
 one can reach, as a model for less smooth situations. 
 
Section \ref{secFurther} shows one such generalization to nonsmooth
domains and coefficients.

\section{Seeley's results on complex interpolation}\label{secSeeley}

Let $A$ be a strongly elliptic second-order differential operator on
${\mathbb R}^n$ with $C^\infty $-coefficients. (The following theory
extends readily to $2m$-order systems with normal boundary conditions
as treated in Seeley \cite{S71,S72} and Grubb \cite{G74}, but we restrict the attention to the
second-order scalar case to keep notation and explanations simple.) 

Let $\Omega $ be a $C^\infty $-smooth bounded open subset of ${\mathbb R}^n$, and let  $A_B$ denote the realization of $A$ in
$L_2(\Omega )$ with domain $\{u\in H^2(\Omega )\mid Bu=0\}$; here $Bu=0$
stands for either the Dirichlet condition $\gamma _0u=0$ or a suitable
Neumann-type boundary condition. In details,
\begin{equation}
Bu=\gamma _0B_ju,\text{ where }j=0\text{ or }j=1;\label{2.1}
\end{equation}
here $B_0=I$, and $B_1$ is a first-order differential operator on ${\mathbb R}^n$ such
 that $\{A, \gamma _0B_1\}$ together form a strongly elliptic boundary
 value problem.
Then $A_B$ is
lower bounded with spectrum in a sectorial region $V=\{\lambda
\in{\mathbb C}\mid |\operatorname{Im}\lambda |\le
 C(\operatorname{Re}\lambda -b)\}$. Our considerations in the
 following are formulated for the case where $A_B$ is bijective. Seeley's papers also
 show how to handle a finite-dimensional 0-eigenspace.

The complex powers of $A_B$ can be defined by spectral theory in
$L_2(\Omega )$ in the
cases where $A_B$ is selfadjoint, but Seeley has shown in \cite{S71} how the powers can be defined
more generally in a consistent way, acting in $L_p$-based Sobolev
spaces $H^s_p(\Omega )$ ($1<p<\infty $), by a Cauchy integral of the resolvent around the spectrum
\begin{equation}
(A_B)^z=\tfrac i {2\pi }\int_{\mathcal C}\lambda
^z(A_B-\lambda )^{-1}\,d\lambda .\label{2.2}
\end{equation}
Here $H^s_p({\mathbb R}^n)$ is the set of distributions $u$ (functions if
 $s\ge 0$) such that $(1-\Delta )^{s/2}u\in L_p({\mathbb R}^n)$, and
 $H^s_p(\Omega )=r^+H^s_p({\mathbb R}^n)$ (denoted $\overline
 H^s_p(\Omega )$ in \cite{G14,G15}), where $r^+$ stands for restriction
 to $\Omega $. The general point of view is that the resolvent is
 constructed as an integral operator (found here by pseudodifferential
 methods) that can be applied to various function spaces, e.g.\ when
 $p$ varies. The different realizations coincide on their common
 domains, so the labels $(A_B-\lambda )^{-1}$ and $(A_B)^z$ are used
 without indication of the actual spaces, which are understood from
 the context (this is standard terminology).

 The formula \eqref{2.2} has a good meaning for $\operatorname{Re}z<0$;
 extensions to other values of $z$ are defined by compositions with integer
 powers of $A_B$. As shown in \cite{S71,S72}, one has in general that
 $(A_B)^{z+w}=(A_B)^z(A_B)^w$, and the operators $(A_B)^z$
 consitute a holomorphic semigroup in $L_p(\Omega )$ for
 $\operatorname{Re}z\le 0$.   
This is based on the fundamental estimates of the resolvent shown in
\cite{S69}. For $\operatorname{Re}z>0$, the $(A_B)^z$ define unbounded
 operators in $L_p(\Omega )$, with domains
 $D_p((A_B)^z)=(A_B)^{-z}(L_p(\Omega ))$. Note in particular that
\begin{equation}
 (A_B)^{-z}\colon D_p((A_B)^w)\simto D_p((A_B)^{z+w})\text{ for }\operatorname{Re}z,\operatorname{Re}w>0.\label{2.3}
\end{equation}
We can of course not repeat the full analysis of Seeley here. An abstract
framework for similar constructions of powers of operators in general
Banach spaces is given in Amann \cite{A87,A95}.

The domains in $L_p(\Omega )$ of the positive powers of $A_B$ will now be
explained for the cases $j=0,1$ in (2.1).

The domain of the realization $A_B$ of
$A$ in $L_p(\Omega )$ with boundary condition $Bu=0$ is 
\begin{equation}
D_p(A_B)=\{u\in H_p^2(\Omega )\mid Bu=0\}.\label{2.4}
\end{equation}
In \cite{S72}, Seeley showed that for $0<a<1$,
the domain of $(A_B)^a$
(the range of $(A_B)^{-a}$ applied to $L_p(\Omega )$)
equals the complex interpolation space between 
$L_p(\Omega )$ and $\{u\in H_p^2(\Omega )\mid Bu=0\} $ of the
appropriate order. He showed moreover that this is
the space of functions  $u\in H_p^{2a}(\Omega )$
satisfying $Bu=0$ if $2a>j+\frac1p$, and  the space of functions
$u\in H_p^{2a}(\Omega )$ with no extra condition if
$2a<j+\frac1p$. He gives the special description for the case
$2a=j+\frac1p$:
\begin{equation}
D_p((A_B)^{\frac12(j+\frac1p)})=\{u\in H_p^{j+\frac1p}(\Omega )\mid
B_ju\in \dot H^{\frac1p}_p(\comega ) \};\label{2.5}
\end{equation}
one can say that $B_ju$ vanishes at $\partial\Omega $ in a generalized sense. (It
is also recalled in Triebel [T95], Th.\ 4.3.3.) We here use a notation of
\cite{H85,G14,G15}, where $\dot
H_p^t(\comega)$ stands for
 the space of functions in $H_p^t({\mathbb
R}^n)$ with support in $\comega$. 

Let us define:

\begin{definition}\label{def2.1} The spaces $H_{p,B,A}^s(\Omega )$ are
defined by:
\begin{align}
H_{p,B,A}^s(\Omega )&=H_{p, B}^s(\Omega )=
H_p^s(\Omega )\text{ for }0\le s<j+\tfrac1p,\label{2.6}
\\
H_{p,B,A}^s(\Omega )&=H_{p, B}^s(\Omega )=
\{u\in H_p^s(\Omega )\mid Bu=0\}\text{ for }j< s-\tfrac1p<j+2,\nonumber
\\
H_{p,B,A}^s(\Omega )&=\{u\in H_p^s(\Omega )\mid
Bu=BAu=\dots=BA^ku=0\}\nonumber\\
&\quad\text{ for }j+2k< s-\tfrac1p<j+2(k+1),\nonumber\\
H_{p,B,A}^s(\Omega )&=\{u\in H_p^s(\Omega )\mid
BA^lu=0\text{ for }l<k,\, B_jA^ku\in \dot H_p^{\frac1p}(\comega)\}\nonumber\\
&\quad\text{ when }s-\tfrac1p=j+2k,\nonumber
\end{align}
where $k\in{\mathbb N}_0$.
\end{definition}


Note that in the first three statements, $H_{p,B,A}^s(\Omega )$ consists of the functions
in $H_p^s(\Omega )$ satisfying those boundary conditions
$BA^lu=0$ for which $j+2l<s-\frac1p$ (i.e., those that are well-defined
on $H_p^s(\Omega )$). The definition in the fourth statement, although
slightly complicated, is
included here primarily in order that we can use the notation $H_{p,B,A}^s(\Omega )$
freely without exceptional parameters.

The spaces $H_{p, B}^s(\Omega )$ were defined in Seeley
\cite{S72} (in Grisvard \cite{G67} for $p=2$); we have added the
definitions for $s>2$ (they can be called extrapolation spaces, as in \cite{A87,A95}). In the $L_2$-case, the extra requirement in
\eqref{2.5} can be replaced by $d^{-\frac12}B_ju\in L_2(\Omega )$, where
$d(x)$ is the distance from $x$ to $\partial\Omega $.

With this notation, Seeley's works show:

\begin{theorem}\label{Theorem2.2} 

When $0<a<1$, $D_p((A_B)^a)$ equals the space $[L_p(\Omega ), H_{p,B}^2(\Omega )]_a$
obtained by complex interpolation between  $L_p(\Omega )$ and $
H_{p,B}^2(\Omega )$.

For all $a>0$, $D_p((A_B)^a)=H_{p,
B,A}^{2a}(\Omega )$.
\end{theorem}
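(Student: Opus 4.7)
The first claim, for $0<a<1$, is exactly what Seeley proves in \cite{S72}: the domain of $(A_B)^a$ defined by \eqref{2.2} coincides with the complex interpolation space $[L_p(\Omega ),H_{p,B}^2(\Omega )]_a$. My plan would therefore be simply to cite \cite{S72} for this identity, and then translate Seeley's explicit description of this interpolation space (recalled in the paragraph preceding Definition \ref{def2.1}) into the notation of Definition \ref{def2.1}. For $0<2a<2$ the three cases $2a>j+\tfrac1p$, $2a<j+\tfrac1p$, $2a=j+\tfrac1p$ correspond one-to-one with the first three bullets of \eqref{2.6} with $k=0$ and with \eqref{2.5}, so $D_p((A_B)^a)=H_{p,B,A}^{2a}(\Omega )$ in the range $0<a<1$.

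For $a\ge 1$ I would proceed by writing $a=k+a'$ with $k\in\N$ and $0\le a'<1$, and reduce the general statement to the case $0<a<1$ already established. The tool is the composition property \eqref{2.3}, which gives the isomorphism
\begin{equation*}
(A_B)^{-k}\colon D_p((A_B)^{a'})\simto D_p((A_B)^{k+a'}),
\end{equation*}
so that $D_p((A_B)^{a})=(A_B)^{-k}\bigl(H_{p,B,A}^{2a'}(\Omega )\bigr)$. The task is then to identify this range with $H_{p,B,A}^{2a}(\Omega )$.

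First I would establish, by an easy induction on $k$ using $(A_B)^k=(A_B)^{k-1}A_B$ and \eqref{2.4}, that the integer-power domain is
\begin{equation*}
D_p((A_B)^k)=\{u\in H_p^{2k}(\Omega )\mid BA^lu=0,\ l=0,\dots ,k-1\},
\end{equation*}
and that on this domain $(A_B)^k$ agrees with the differential action $A^k$. So a function $u=(A_B)^{-k}f$ with $f\in H_{p,B,A}^{2a'}(\Omega )$ belongs to $D_p((A_B)^k)$, satisfies $A^ku=f$, and inherits the additional boundary conditions $BA^{k+l}u=BA^l f=0$ encoded in $f\in H_{p,B,A}^{2a'}(\Omega )$ (with the exceptional interpretation at $2a'-\tfrac1p=j$). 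Combining these with standard elliptic $H^s_p$-regularity for the boundary value problem $\{A,B\}$, applied $k$ times to lift $u$ from $H_p^{2k}$ up to $H_p^{2k+2a'}=H_p^{2a}$, yields $u\in H_{p,B,A}^{2a}(\Omega )$; conversely every such $u$ is visibly of the form $(A_B)^{-k}f$ with $f=A^ku\in H_{p,B,A}^{2a'}(\Omega )$.

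The least routine step in this plan is the handling of the borderline regularity $2a'=j+\tfrac1p$ (and likewise $2a-\tfrac1p\in j+2\N_0$), where the boundary condition on $A^ku$ has to be read in the generalized sense of \eqref{2.5}; one has to check that elliptic lifting and the iterated composition $(A_B)^{-k}$ still preserve exactly this refined condition. Once that bookkeeping is in place, the two ingredients --- Seeley's interpolation identity for $0<a<1$ and the shifting isomorphism \eqref{2.3} --- combine cleanly to give the stated equality $D_p((A_B)^a)=H_{p,B,A}^{2a}(\Omega )$ for all $a>0$.
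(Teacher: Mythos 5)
Your proposal is correct and follows essentially the same route as the paper: cite Seeley \cite{S72} for the interpolation identity and the explicit description of $D_p((A_B)^a)$ when $0<a\le 1$, then use the shift isomorphism \eqref{2.3} with $a=k+a'$ to reach general $a$. The paper leaves the identification of $(A_B)^{-k}\bigl(H^{2a'}_{p,B,A}(\Omega)\bigr)$ with $H^{2a}_{p,B,A}(\Omega)$ implicit, whereas you spell out the elliptic-regularity bookkeeping (including the borderline case \eqref{2.5}); this is a welcome elaboration, not a different argument.
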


\begin{proof} The first statement is a direct quotation from
\cite{S72}. So is the second statement for $0<a\le 1$, and it follows for $a=a'+k$, $0<a'\le 1$ and
$k\in {\mathbb N}$, by using (2.3) with $w=a'$, $z=k$.
\end{proof}
 
Observe the general homeomorphism property that follows from this theorem
in view of formula \eqref{2.3}:

\begin{corollary}\label{Corollary2.3} For $a>0$,
$(A_B)^a$ defines homeomorphisms: 
\begin{equation}
(A_B )^a\colon H^{s+2a}_{p,B ,A}(\Omega )\simto
H^{s}_{p,B ,A}(\Omega ),\text{ for all }s\ge 0.\label{2.7}
\end{equation}
\end{corollary}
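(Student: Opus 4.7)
The plan is to read the corollary as a direct combination of the domain identification in Theorem \ref{Theorem2.2} with the semigroup isomorphism property \eqref{2.3}. The strategy is purely algebraic once those two ingredients are in place.

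First, I would translate the target spaces into domains of powers of $A_B$. Theorem \ref{Theorem2.2} says precisely that $H^{2b}_{p,B,A}(\Omega)=D_p((A_B)^b)$ for every $b>0$, while the first line of Definition \ref{def2.1} together with the convention $D_p((A_B)^0)=L_p(\Omega)$ identifies $H^0_{p,B,A}(\Omega)$ with $L_p(\Omega)$. Consequently, writing $b=s/2$ and $c=a+s/2$, the claim \eqref{2.7} becomes the statement that
\[
(A_B)^a\colon D_p((A_B)^c)\simto D_p((A_B)^b)
\]
for all $s\ge 0$, i.e.\ for all $c>b\ge 0$ with $c-b=a$.

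Next I would invoke \eqref{2.3}. Applied with $z=a$ (so $\operatorname{Re}z=a>0$) and $w=b=s/2$ in the case $s>0$, it yields the homeomorphism $(A_B)^{-a}\colon D_p((A_B)^b)\simto D_p((A_B)^{a+b})$. Taking the inverse gives exactly the desired homeomorphism in the other direction, and by construction this inverse is $(A_B)^a$, using the composition rule $(A_B)^{z+w}=(A_B)^z(A_B)^w$ recalled just after \eqref{2.2}. For the boundary case $s=0$, I would use instead that $(A_B)^{-a}$ is by definition a bounded bijection from $L_p(\Omega)$ onto its range $D_p((A_B)^a)=H^{2a}_{p,B,A}(\Omega)$, whose inverse is $(A_B)^a$.

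The only substantive point to verify is therefore the bookkeeping on endpoints. I would check that when $s-\tfrac1p=j+2k$ the ``generalized vanishing'' condition $B_jA^ku\in \dot H^{1/p}_p(\comega)$ in Definition \ref{def2.1} is precisely what Theorem \ref{Theorem2.2} identifies as $D_p((A_B)^{s/2})$ (this is the content of \eqref{2.5} composed with $k$ integer powers of $A_B$ via \eqref{2.3}); all other values of $s$ are handled by the non-exceptional lines of the definition. Once this identification is in hand, the corollary follows immediately, so I expect no genuine obstacle beyond a careful match of the spaces on the two sides.
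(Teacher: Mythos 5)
Your proposal is correct and follows essentially the same route as the paper, which gives no separate proof but simply observes that \eqref{2.7} ``follows from this theorem in view of formula \eqref{2.3}'' --- i.e.\ exactly your combination of the domain identification $D_p((A_B)^b)=H^{2b}_{p,B,A}(\Omega)$ from Theorem~\ref{Theorem2.2} with the isomorphism \eqref{2.3} applied with $z=a$, $w=s/2$. Your additional care with the endpoint $s=0$ and the exceptional parameters $s-\tfrac1p=j+2k$ is sound bookkeeping that the paper leaves implicit.
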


The characterization of the interpolation space was given (also for $2m$-order
operators) by Grisvard in
the case of scalar elliptic operators in $L_2$ Sobolev
spaces in \cite{G67}, in terms of real interpolation. Seeley's result
for $1<p<\infty $
is shown for general elliptic operators in vector bundles, with normal boundary conditions.

\section{Consequences for the Dirichlet problem}\label{secDir}

Let $B=\gamma _0$, denoted $\gamma $ for brevity.
Corollary \ref{Corollary2.3} already shows how the regularity
of $u$ and $f=(A_\gamma )^au$ are related, when the functions are known on
beforehand to lie in the special spaces in \eqref{2.6}. But we can also
discuss cases where $f$ is just given in a general Sobolev
space. Namely, we have as a
generalization of the remarks at the end of \cite{S72}:

\begin{theorem}\label{Theorem3.1} Let $0<a<1$. Let $f\in H^{s}_{p}(\Omega )$ for
some $s\ge 0$, and assume that $u\in D_p((A_{\gamma })^a)$ is a solution of
\begin{equation}
(A_{\gamma })^au=f.\label{3.1}
\end{equation}

$1^\circ$ If $s<\frac1p$, then $u\in H^{s+2a}_{p,\gamma }(\Omega )$.

$2^\circ$ Let $\frac1p<s<2+\frac1p$. Then $u\in H^{\frac1p+2a-\varepsilon
}_{p,\gamma }(\Omega )$ for all $\varepsilon >0$. Moreover,  $u\in
H^{s+2a}_{p}(\Omega )$ if and only if $\gamma f=0$, and then in fact $u\in  H^{s+2a}_{p,\gamma
}(\Omega )$.
\end{theorem}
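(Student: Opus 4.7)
The plan is to read both statements directly off Corollary~\ref{Corollary2.3} by matching the hypothesis on $f$ against the clauses of Definition~\ref{def2.1} (applied with $j=0$). For $1^\circ$, since $0\le s<\tfrac1p$ lies in the first clause, $H^s_{p,\gamma ,A}(\Omega )=H^s_p(\Omega )$ and so $f\in H^s_{p,\gamma ,A}$ is automatic; Corollary~\ref{Corollary2.3} places $u=(A_\gamma )^{-a}f$ in $H^{s+2a}_{p,\gamma ,A}$, which coincides with $H^{s+2a}_{p,\gamma }$ because $s+2a<2+\tfrac1p$. Part $2^\circ$(a) is a Sobolev consequence of $1^\circ$: $f\in H^s_p\subset H^{1/p-\varepsilon }_p$ for every $\varepsilon >0$, and $1^\circ$ applied at level $s'=\tfrac1p-\varepsilon $ delivers $u\in H^{1/p+2a-\varepsilon }_{p,\gamma }$.

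For the easy half of $2^\circ$(b), the second clause of Definition~\ref{def2.1} identifies $H^s_{p,\gamma ,A}$ with $\{g\in H^s_p:\gamma g=0\}$ when $\tfrac1p<s<2+\tfrac1p$, so the hypothesis $\gamma f=0$ places $f$ in $H^s_{p,\gamma ,A}$ and Corollary~\ref{Corollary2.3} yields $u\in H^{s+2a}_{p,\gamma ,A}\subset H^{s+2a}_{p,\gamma }\subset H^{s+2a}_p$.

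The converse direction — deducing $\gamma f=0$ from $u\in H^{s+2a}_p$ — is the main obstacle. My plan is to first show that $\gamma u=0$ is automatic: since $s>\tfrac1p$, $f\in H^{2t}_p=H^{2t}_{p,\gamma ,A}$ for every $t\in (0,\tfrac1{2p})$, so $(A_\gamma )^tf\in L_p$ by Corollary~\ref{Corollary2.3}, and the semigroup identity $u=(A_\gamma )^{-a-t}((A_\gamma )^tf)$ places $u$ in $D_p((A_\gamma )^{a+t})=H^{2(a+t)}_{p,\gamma ,A}$ via Theorem~\ref{Theorem2.2}. Picking $t$ in the non-empty interval $(\max(0,\tfrac1{2p}-a),\tfrac1{2p})$ makes $2(a+t)>\tfrac1p$, forcing $\gamma u=0$; combined with the hypothesis $u\in H^{s+2a}_p$ this already delivers the ``then in fact $u\in H^{s+2a}_{p,\gamma }$'' claim. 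In the sub-range $s+2a<2+\tfrac1p$ the two spaces $H^{s+2a}_{p,\gamma }$ and $H^{s+2a}_{p,\gamma ,A}$ coincide, so a final application of Corollary~\ref{Corollary2.3} converts $u\in H^{s+2a}_{p,\gamma ,A}$ into $f\in H^s_{p,\gamma ,A}$, i.e.\ $\gamma f=0$. The narrow complementary sub-range $s+2a\ge 2+\tfrac1p$ (possible only for $s$ close to $2+\tfrac1p$) demands the additional boundary condition $\gamma (Au)=0$; I plan to obtain this by rerunning the domain-upgrade on $Au=(A_\gamma )^{1-a}f\in H^{s+2a-2}_p$. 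Handling this iteration cleanly — in particular, ensuring $f$ lies in $D_p((A_\gamma )^{1-a})$ when this membership itself involves a boundary condition (the case $a<1-\tfrac1{2p}$) — is the technically most delicate step of the plan.
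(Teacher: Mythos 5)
Your treatment of $1^\circ$, of the statement $u\in H^{\frac1p+2a-\varepsilon}_{p,\gamma}(\Omega)$, and of the ``if'' half of $2^\circ$ coincides with the paper's. The genuine gap is in the ``only if'' half, and you have located it yourself without closing it: when $s+2a\ge 2+\tfrac1p$ (a non-empty sub-range, namely $2+\tfrac1p-2a\le s<2+\tfrac1p$, which lies inside $(\tfrac1p,2+\tfrac1p)$ because $a<1$), your route insists on first placing $u$ in $H^{s+2a}_{p,\gamma,A}(\Omega)$, which by Definition~\ref{def2.1} carries the extra condition $\gamma Au=0$. The iteration you sketch to obtain that condition is circular: to write $Au=(A_\gamma)^{1-a}f$ you must already know $f\in D_p((A_\gamma)^{1-a})$, and when $2(1-a)>\tfrac1p$ that membership is exactly the statement $\gamma f=0$ you are trying to prove. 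As written, the proposal therefore does not establish the converse implication on that sub-range.

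The repair --- and the paper's actual argument --- is never to apply the homeomorphism \eqref{2.7} at the top regularity level. Having established $\gamma u=0$ (your semigroup argument for this is correct, and equivalent to invoking the $\varepsilon$-statement with $\varepsilon<2a$), pick any $\sigma$ with $\tfrac1p+2a<\sigma<\min\{s+2a,\,2+\tfrac1p\}$; such $\sigma$ exist precisely because $s>\tfrac1p$ and $a<1$. Then $u\in H^{\sigma}_{p}(\Omega)$ with $\gamma u=0$, and since $\tfrac1p<\sigma<2+\tfrac1p$ this single trace condition already gives $u\in H^{\sigma}_{p,\gamma,A}(\Omega)$. Corollary~\ref{Corollary2.3} then yields $f=(A_\gamma)^a u\in H^{\sigma-2a}_{p,\gamma,A}(\Omega)$ with $\tfrac1p<\sigma-2a<2+\tfrac1p$, hence $\gamma f=0$ by Definition~\ref{def2.1}. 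The final claim $u\in H^{s+2a}_{p,\gamma}(\Omega)$ should then be drawn from the already-proved ``if'' direction, not from $\gamma u=0$ together with $u\in H^{s+2a}_p(\Omega)$ alone, since for $s+2a\ge 2+\tfrac1p$ the latter is weaker than membership in $H^{s+2a}_{p,\gamma,A}(\Omega)$, which is what \eqref{2.7} actually delivers.
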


\begin{proof} $1^\circ$. When $s<\frac1p$, we can simply use that
$u=(A_\gamma )^{-a}f$, where $(A_\gamma )^{-a}$ defines a
homeomorphism from $ H^{s}_{p}(\Omega )$ to  $ H^{s+2a}_{p,\gamma
}(\Omega )$ in view of \eqref{2.7}.

$2^\circ$. We first note that since $s>\frac1p>\frac1p-\varepsilon $,
all $\varepsilon >0$, the preceding result shows that $u\in H^{\frac1p+2a-\varepsilon
}_{p,\gamma }(\Omega )$ for all $\varepsilon >0$.

Now if $\gamma f=0$, then $f\in   H^{s}_{p,\gamma
}(\Omega )$ by \eqref{2.6}. Hence  $u\in  H^{s+2a}_{p,\gamma
}(\Omega )$ since  $(A_\gamma )^{-a}$ defines a
homeomorphism from $ H^{s}_{p,\gamma }(\Omega )$ to  $ H^{s+2a}_{p,\gamma
}(\Omega )$ according to \eqref{2.7}.

Conversely, let $u\in  H^{s+2a}_{p}(\Omega )$. Then since we know
already that  $u\in H^{\frac1p+2a-\varepsilon
}_{p,\gamma }(\Omega )$, we see that $\gamma u=0$ (taking $\varepsilon
<2a$). Then by \eqref{2.6}, $u\in H^{\sigma 
}_{p,\gamma }(\Omega )$ for $\frac1p+2a<\sigma <\min\{
s+2a,2+\frac1p\}$; such $\sigma $ exist since $a<1$. Hence  $f\in H^{\sigma -2a
}_{p,\gamma }(\Omega )$ with $\sigma -2a>\frac1p$ and therefore has $\gamma f=0$.
\end{proof}

Point $2^\circ$ in the theorem shows that $f$ may have to be provided with a nontrivial boundary
condition in order for the best possible regularity to hold for
$u$. This is in contrast to the case where
$a=1$, where it is known that for $u$ satisfying $-\Delta u=f$ with
$\gamma u=0$, $f\in H^s_p(\Omega )$ always implies $u\in
H^{s+2}_p(\Omega )$. 

The case $s=\frac 1p$ can be included
in $2^\circ$ when we use the generalized boundary condition in \eqref{2.4};
details are given for the general case in Theorem 3.2 $2^\circ$ below.

The importance of a boundary condition on $f$ for optimal regularity
of $u$ is also demonstrated in the results of
Caffarelli and Stinga \cite{CS14} (and  Cabr\'e{} and Tan
\cite{CT10}).

By induction, we can extend the result to higher $s$:

\begin{theorem}\label{Theorem3.2} Let $0<a<1$. Let $u\in D_p((A_{\gamma })^a)$ be
the solution of {\rm \eqref{3.1}} with $f\in H^s_{p}(\Omega )$ for some $s\ge 0$.
One has for any $k\in {\mathbb N}_0$: 

$1^\circ$ If
$2k+\frac1p<s<2k+2+\frac1p$, and $\gamma A^lf=0$ for $l=0,1,\dots ,k$
(i.e., $f\in H^{s}_{p,\gamma,A }(\Omega )$),
then $u\in H^{s+2a}_{p,\gamma,A }(\Omega )$.

On the other hand, if $u\in H^{s+2a}_{p }(\Omega )$, then necessarily $\gamma
A^lf=0$ for $l=0,1,\dots ,k$ (and hence  $f\in H^{s}_{p,\gamma,A
}(\Omega )$ and $u\in H^{s+2a}_{p,\gamma,A }(\Omega )$). 

$2^\circ$ Let $s=2k+\frac1p$. If $f\in H^{s}_{p,\gamma,A }(\Omega
)$, then $u\in H^{s+2a}_{p,\gamma,A }(\Omega )$. 
On the other hand, if $u\in H^{s+2a}_{p }(\Omega )$, then 
necessarily $f\in  H^{s}_{p,\gamma,A }(\Omega )$ and  $u\in H^{s+2a}_{p,\gamma,A }(\Omega )$. 

\end{theorem}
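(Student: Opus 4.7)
The plan is an induction on $k \in \mathbb{N}_0$, with Theorem \ref{Theorem3.1} supplying the base case and Corollary \ref{Corollary2.3} doing the heavy lifting at each step. The forward implications in both $1^\circ$ and $2^\circ$ are immediate: if $f \in H^s_{p,\gamma,A}(\Omega)$, then $u = (A_{\gamma})^{-a} f$ lies in $H^{s+2a}_{p,\gamma,A}(\Omega)$ directly from the homeomorphism \eqref{2.7}, regardless of whether $s$ is in the generic or the critical regime of Definition \ref{def2.1}. Hence all the real content is in the converse implications, and these are what require the induction.

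For the converse of $1^\circ$ at stage $k \ge 1$, assume $u \in H^{s+2a}_p(\Omega)$ with $s \in (2k+\tfrac1p,\,2(k+1)+\tfrac1p)$. Choose $s' \in (2(k-1)+\tfrac1p,\,2k+\tfrac1p)$ with $s' + 2a > 2k + \tfrac1p$, which is possible since $a > 0$. Then $u \in H^{s'+2a}_p$ and $f \in H^{s'}_p$, so the inductive hypothesis of $1^\circ$ at stage $k-1$ (or Theorem \ref{Theorem3.1} when $k = 1$) yields $f \in H^{s'}_{p,\gamma,A}(\Omega)$. By Corollary \ref{Corollary2.3}, $u = (A_{\gamma})^{-a} f$ then lies in $H^{s'+2a}_{p,\gamma,A}(\Omega)$; since $s'+2a - \tfrac1p$ now sits in $(2k,\,2(k+1))$, Definition \ref{def2.1} translates this into $\gamma A^l u = 0$ for $l = 0, \ldots, k$. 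Next pick $\sigma$ with $2k + \tfrac1p + 2a < \sigma < \min(s+2a,\,2(k+1)+\tfrac1p)$; this interval is nonempty precisely because $s > 2k+\tfrac1p$ and $a < 1$. Then $u \in H^\sigma_{p,\gamma,A}(\Omega)$, and Corollary \ref{Corollary2.3} gives $f \in H^{\sigma-2a}_{p,\gamma,A}(\Omega)$. Since $\sigma - 2a > 2k+\tfrac1p$ we read off $\gamma A^l f = 0$ for $l = 0, \ldots, k$; combined with $f \in H^s_p$ this shows $f \in H^s_{p,\gamma,A}(\Omega)$, and the forward direction upgrades it to $u \in H^{s+2a}_{p,\gamma,A}(\Omega)$.

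The statement $2^\circ$ at the critical parameter $s = 2k + \tfrac1p$ follows by the same two-step device. Taking $s' \in (2(k-1)+\tfrac1p,\,2k+\tfrac1p)$ with $s' + 2a > 2k + \tfrac1p$ and invoking $1^\circ$ at stage $k-1$ (or Theorem \ref{Theorem3.1} when $k = 0$) gives $\gamma A^l f = 0$ for $l \le k-1$ and then, via Corollary \ref{Corollary2.3}, $\gamma A^l u = 0$ for $l \le k$ exactly as in the previous paragraph. Since $s + 2a - \tfrac1p = 2k + 2a \in (2k,\,2(k+1))$ lies in the generic third case of Definition \ref{def2.1}, these traces together with $u \in H^{s+2a}_p$ place $u$ in $H^{s+2a}_{p,\gamma,A}(\Omega)$, whence Corollary \ref{Corollary2.3} returns $f \in H^s_{p,\gamma,A}(\Omega)$. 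At the critical value $s - \tfrac1p = 2k$ this membership is, by the fourth case of Definition \ref{def2.1}, precisely the assertion that $\gamma A^l f = 0$ for $l < k$ together with $A^k f \in \dot H^{\frac1p}_p(\comega)$.

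The main obstacle I anticipate is less conceptual than bookkeeping: ensuring that at each application of Definition \ref{def2.1} the exponent in question falls strictly between two consecutive critical values $2l + \tfrac1p$, so that the clean third-case description applies and one does not inadvertently land on a boundary value forcing the more delicate $\dot H^{\frac1p}_p$-condition. The hypothesis $a < 1$ enters exactly here, both to make the interval for $\sigma$ nonempty and to ensure that shifting by $2a$ does not skip over an entire window of Definition \ref{def2.1}.
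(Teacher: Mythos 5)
Your proposal is correct and follows essentially the same route as the paper: induction on $k$ with Theorem \ref{Theorem3.1} as the base case, the forward implications read off from the homeomorphism \eqref{2.7}, and the converses obtained by first applying the previous stage at a slightly smaller $s'$ (with $s'+2a>2k+\tfrac1p$) to extract the traces $\gamma A^lu=0$ for $l\le k$, then choosing $\sigma$ in the window $(2k+\tfrac1p+2a,\ \min\{s+2a,2k+2+\tfrac1p\})$ — nonempty because $a<1$ — to transfer the conditions back to $f$. The only cosmetic difference is that you re-derive $u\in H^{s'+2a}_{p,\gamma,A}(\Omega)$ from $f\in H^{s'}_{p,\gamma,A}(\Omega)$ via Corollary \ref{Corollary2.3} rather than quoting the ``and hence'' clause of the inductive statement directly.
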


\begin{proof} Statement $1^\circ$ was shown for $k=0$ in Theorem \ref{Theorem3.1}
$2^\circ$. We proceed by induction: Assume that the statement holds
for $k\le k_0-1$. Now show it for $k_0$:

If $\gamma A^lf=0$ for $l\le k_0$, then $f\in   H^{s}_{p,\gamma,A
}(\Omega )$ by \eqref{2.6}. Hence  $u\in  H^{s+2a}_{p,\gamma,A
}(\Omega )$ since  $(A_\gamma )^{-a}$ defines a
homeomorphism from $ H^{s}_{p,\gamma,A }(\Omega )$ to  $ H^{s+2a}_{p,\gamma,A
}(\Omega )$ according to \eqref{2.7}.

Conversely, let $u\in  H^{s+2a}_{p}(\Omega )$. 
Note that since $s>\frac1p+2k_0>\frac1p+2k_0-\varepsilon $,
all $\varepsilon >0$, the result for $k_0-1$ shows that $u\in H^{\frac1p+2k_0+2a-\varepsilon
}_{p,\gamma,A }(\Omega )$ for all $\varepsilon >0$. Then, taking
$\varepsilon <2a$, we see that $\gamma A^lu=0$ for $l\le k_0$. 
Now in view of \eqref{2.6}, $u\in H^{\sigma 
}_{p,\gamma,A }(\Omega )$ for $\frac1p+2k_0+2a<\sigma <\min\{
s+2a,2+2k_0+\frac1p\}$; such $\sigma $ exist since $a<1$. Hence  $f\in H^{\sigma -2a
}_{p,\gamma,A }(\Omega )$ with $\sigma -2a >2k_0+\frac1p$; therefore it
has $\gamma A^lf=0$ for $l\le k_0$.

The first part of statement $2^\circ$ follows immediately from \eqref{2.7}. For the second part, let  $u\in  H^{s+2a}_{p}(\Omega
)$, $s=2k+\frac1p$. Since $s>2k+\frac1p-\varepsilon $, we see by
application of $1^\circ$ with $s'=2k+\frac1p-\varepsilon $ that  $u\in
H^{2k+\frac1p-\varepsilon +2a}_{p,\gamma ,A}(\Omega )$. For $\varepsilon <2a$
this shows that $\gamma A^lu=0$ for $l\le k$. Now $s+2a=2k+\frac1p+2a$
also lies in $\,]2k+\frac1p, 2k+2+\frac1p[\,$ (since $a<1$) so in fact
$u\in  H^{s+2a}_{p,\gamma ,A}(\Omega )$, and  $f\in  H^{s}_{p,\gamma ,A}(\Omega )$. 
\end{proof}

Briefly expressed, the theorem shows that in order to have
optimal regularity, namely the improvement
from $f$ lying in an  $H^s_p$-space to $u$ lying in an $H^{s+2a}_p$-space,
it is necessary and sufficient to impose all the boundary conditions for the
space $H^s_{p,\gamma,A }(\Omega )$ on $f$.

In the following, we assume throughout that $0<a<1$. (Results for
higher $a$ can be deduced from the present results by use of elementary mapping
properties for integer powers, and are left to the reader.) As a first corollary, we can describe $C^\infty $-solutions.
Define
\begin{equation}
C^\infty _{\gamma,A }(\comega)=\{u\in C^\infty (\comega)\mid \gamma
A^ku=0\text{ for all }k\in{\mathbb N}_0\}.\label{3.2}
\end{equation}

\begin{corollary}\label{Corollary3.3} The operator $(A_\gamma )^a$
defines a homeomorphism of $C^\infty _{\gamma,A }(\comega)$ onto
itself.

Moreover, if $u\in H^{2a}_{p,\gamma,A }(\Omega )\cap C^\infty (\comega)$ for
some $p$, then $(A_\gamma )^au\in C^\infty (\comega)$ implies $u\in
C^\infty _{\gamma,A }(\comega)$ (and hence $(A_\gamma )^au \in C^\infty _{\gamma,A }(\comega)$). 
\end{corollary}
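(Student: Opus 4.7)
The plan is to deduce the corollary from Theorem \ref{Theorem3.2} together with the standard Fréchet identifications
\[
C^\infty(\comega)=\bigcap_{s\ge 0}H^s_p(\Omega),\qquad C^\infty_{\gamma,A}(\comega)=\bigcap_{s\ge 0}H^s_{p,\gamma,A}(\Omega),
\]
valid for any fixed $p\in(1,\infty)$. Both follow from Sobolev embedding together with the observation that the conditions $\gamma A^lu=0$ built into $H^s_{p,\gamma,A}(\Omega)$ exhaust all $l\in\mathbb N_0$ as $s\to\infty$, since $j=0$ here and $l<(s-\tfrac1p)/2$ can be made arbitrarily large.

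For the first statement, let $u\in C^\infty_{\gamma,A}(\comega)$. Then $u\in H^{s+2a}_{p,\gamma,A}(\Omega)$ for every $s\ge 0$, so by \eqref{2.7} we have $(A_\gamma)^au\in H^{s}_{p,\gamma,A}(\Omega)$ for every $s\ge 0$, hence $(A_\gamma)^au\in C^\infty_{\gamma,A}(\comega)$. Conversely, if $f\in C^\infty_{\gamma,A}(\comega)$ then $f\in H^{s}_{p,\gamma,A}(\Omega)$ for all $s\ge 0$, and the homeomorphism \eqref{2.7} yields $u=(A_\gamma)^{-a}f\in H^{s+2a}_{p,\gamma,A}(\Omega)$ for all $s\ge 0$, hence $u\in C^\infty_{\gamma,A}(\comega)$. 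Continuity in the Fréchet topology on $C^\infty_{\gamma,A}(\comega)$ follows from the $H^s_p$-continuity at each level.

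For the second statement, let $u\in H^{2a}_{p,\gamma,A}(\Omega)\cap C^\infty(\comega)$ and set $f:=(A_\gamma)^au$. Assuming $f\in C^\infty(\comega)$, we have $f\in H^s_p(\Omega)$ for every $s\ge 0$; and since $u\in C^\infty(\comega)$, also $u\in H^{s+2a}_p(\Omega)$ for every $s\ge 0$. Applying the converse part of Theorem \ref{Theorem3.2} $1^\circ$ with $s$ ranging through the open intervals $(2k+\tfrac1p,\,2k+2+\tfrac1p)$ for $k=0,1,2,\dots$ (avoiding the exceptional half-integer values) gives simultaneously $\gamma A^lf=0$ for every $l$, and $u\in H^{s+2a}_{p,\gamma,A}(\Omega)$ for every such $s$. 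Intersecting over $s$ produces $u\in C^\infty_{\gamma,A}(\comega)$, as required.

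I do not anticipate a genuine obstacle: the argument is a routine bootstrap obtained by letting $s\to\infty$ in Theorem \ref{Theorem3.2}, the only minor point being that the exceptional values $s=2k+\tfrac1p$ are harmlessly avoided by choosing nearby $s$ in the open intervals, since the $C^\infty$-conclusion is insensitive to individual Sobolev levels. The auxiliary hypothesis $u\in H^{2a}_{p,\gamma,A}(\Omega)$ in the second statement is used only to ensure that $(A_\gamma)^au$ is well-defined within the Seeley framework, so that Corollary \ref{Corollary2.3} and Theorem \ref{Theorem3.2} apply.
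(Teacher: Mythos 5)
Your proposal is correct and follows essentially the same route as the paper: identify $C^\infty_{\gamma,A}(\comega)$ with $\bigcap_{s\ge 0}H^s_{p,\gamma,A}(\Omega)$ via Sobolev embedding, apply the homeomorphism \eqref{2.7} at each level for the first statement, and bootstrap Theorem \ref{Theorem3.2} with arbitrarily large $k$ (avoiding the exceptional values $s=2k+\tfrac1p$) for the second.
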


\begin{proof} 
Fix $p$. We first note that 
\begin{equation}
C^\infty _{\gamma,A }(\comega)
=\bigcap_{s\ge 0} H^{s}_{p,\gamma,A }(\Omega ).\label{3.3}
\end{equation}
Here the inclusion '$\subset$' follows from the observation
\[
\{u\in C^\infty (\comega)\mid \gamma
A^lu=0\text{ for }l\le k\} \subset  H^{2k+\frac1p-\varepsilon }_{p,\gamma,A }(\Omega ),
\]
by taking the intersection over all $k$. The other inclusion follows from
$$
H^{2k+\frac1p-\varepsilon }_{p,\gamma,A }(\Omega )\subset \{u\in C^N
(\comega)\mid N<2k+\tfrac1p-\varepsilon -\tfrac np,\,
\gamma
A^lu=0\text{ for }2l\le N\},
$$
by taking intersections for $k\to\infty $.

The fact that $(A_\gamma )^a$ maps $H^{s}_{p,\gamma,A }(\Omega )$
homeomorphically to $H^{s-2a}_{p,\gamma,A }(\Omega )$ for all $s\ge
2a$ now implies that $(A_\gamma )^a$ maps $C^\infty _{\gamma,A
}(\comega)$ to $C^\infty _{\gamma,A }(\comega)$ with inverse
$(A_\gamma )^{-a}$.

Next, let  $u\in H^{2a}_{p,\gamma }(\Omega )\cap C^\infty (\comega)$. If 
$(A_\gamma )^au\in C^\infty (\comega)$, then Theorem 3.2 can be
applied with arbitrarily large $k$, showing that $u\in
C^\infty _{\gamma,A }(\comega)$, and hence $(A_\gamma )^au \in C^\infty _{\gamma,A }(\comega)$.
\end{proof}

\begin{remark}\label{Remark3.4} It follows that for each $1<p<\infty $, the
eigenfunctions of $(A_\gamma )^a$ (with domain $H^{2a}_{p,\gamma }(\Omega
)$) belong to $C^\infty _{\gamma ,A}(\comega)$; they are the same for all
$p$. In particular, when $A_\gamma $ is selfadjoint in
$L_2(\Omega )$, the eigenfunctions of $(A_\gamma )^a$ defined by spectral
theory (that are the same as those of $A_\gamma $) are the eigenfunctions also
in the $L_p$-settings.
\end{remark}

Finally, let us draw some conclusions for regularity properties when
$f\in L_\infty (\Omega )$ or is in a H\"older space.
As in \cite{G15}, we denote by $C^\alpha (\comega)$ the space of functions that are
continuously differentiable up to order $\alpha $ when $\alpha
\in{\mathbb N}_0$, and are in the H\"older class
$C^{k,\sigma }(\comega)$ when $\alpha =k+\sigma $, $k\in{\mathbb N}_0$
and $0<\sigma <1$. 
Recall that the H\"older-Zygmund spaces $B^s _{\infty ,\infty
}(\comega)$, also denoted $C^s_*(\comega)$, coincide with
$C^s(\comega)$ when $s\in{\mathbb R}_+\setminus {\mathbb N}$, and there is
the Sobolev embedding property
$$
H_p^s(\Omega )\subset C^{s-\frac np}_*(\comega)\text{ for all }s>\tfrac np.
$$
(Embedding and trace mapping properties for Besov-Triebel-Lizorkin
spaces $F^s_{p,q}$ and $B^s_{p,q}$ are compiled e.g.\ in Johnsen
\cite{J96}, Sect.\ 2.3, 2.6; note that $H^s_p=F^s_{p,2}$.) Recall also that
$C^k(\comega)\subset C^{k-1,1}(\comega)\subset 
C^k_*(\comega)\subset C^{k-0}(\comega)$ for $k\in{\mathbb N}$. Here we
use the notation $C^{\alpha-0}=\bigcap_{\varepsilon >0}C^{\alpha
-\varepsilon }$ (it is applied 
similarly to $H_p^s$-spaces).

\begin{corollary}\label{Corollary3.5} 
$1^\circ$ 
Let $f\in L_p(\Omega )$ with $\frac n p <2a$. If $2a-\frac n p \ne 1$, 
resp.\ $=1$, then
the solution $u$ of {\rm \eqref{3.1}} is in $C^{2a-\frac np}(\comega)$, resp.\
$C^1_*(\comega)$, with $\gamma
u=0$.

$2^\circ$
If $f\in L_\infty (\Omega )$,
then the solution $u$ of {\rm \eqref{3.1}} is in $C^{2a-0}(\comega)$ with $\gamma u=0$.
\end{corollary}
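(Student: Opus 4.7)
The plan is to reduce both parts to Theorem~\ref{Theorem3.1} $1^\circ$ and then apply the Sobolev embedding $H^s_p(\Omega)\subset C^{s-n/p}_*(\comega)$ recalled just before the corollary.

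For $1^\circ$, I would apply Theorem~\ref{Theorem3.1} $1^\circ$ with $s=0$: since $0<1/p$, any $f\in L_p(\Omega)$ forces $u\in H^{2a}_{p,\gamma}(\Omega)$. The hypothesis $n/p<2a$ implies $1/p<2a$ (as $n\ge 1$), so $2a$ falls in the middle range of Definition~\ref{def2.1}, and being in $H^{2a}_{p,\gamma}(\Omega)$ encodes $\gamma u=0$. Sobolev embedding then gives $u\in C^{2a-n/p}_*(\comega)$. When $2a-n/p\notin\mathbb N$ we have $C^{2a-n/p}_*=C^{2a-n/p}$; the exceptional integer value in our range is $2a-n/p=1$, where we must keep the Zygmund notation $C^1_*(\comega)$, as stated.

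For $2^\circ$, I would observe that $f\in L_\infty(\Omega)\subset L_p(\Omega)$ for every $p<\infty$ since $\Omega$ is bounded. Given any $\varepsilon>0$, choose $p$ so large that $n/p<\varepsilon$ (and, if desired, $2a-n/p\ne 1$). Then $1^\circ$ yields $u\in C^{2a-n/p}(\comega)\subset C^{2a-\varepsilon}(\comega)$ with $\gamma u=0$; intersecting over $\varepsilon$ gives $u\in C^{2a-0}(\comega)$ with $\gamma u=0$.

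I do not expect a serious obstacle here: the work is already done by Theorem~\ref{Theorem3.1}, and the only thing that requires care is the borderline value $2a-n/p=1$ where the standard embedding lands in $C^1_*$ rather than $C^1$, and the fact that the trace $\gamma u$ is well defined (which is guaranteed as soon as the Sobolev regularity index exceeds $1/p$, a condition implied by $n/p<2a$). The passage $L_\infty\subset L_p$ uses boundedness of $\Omega$, and letting $p\to\infty$ is the standard way to convert an $L_\infty$ datum into a $C^{2a-0}$ conclusion.
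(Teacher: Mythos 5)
Your proposal is correct and follows essentially the same route as the paper: Theorem~\ref{Theorem3.1} $1^\circ$ with $s=0$ gives $u\in H^{2a}_{p,\gamma}(\Omega)$, Sobolev embedding into $C^{2a-n/p}_*(\comega)$ handles $1^\circ$ (with the Zygmund space at the borderline $2a-n/p=1$), and letting $p\to\infty$ for $f\in L_\infty(\Omega)$ gives $2^\circ$. The observations that $n/p<2a$ forces $1/p<2a$ (so the trace condition $\gamma u=0$ is encoded in $H^{2a}_{p,\gamma}(\Omega)$) and that $2a-n/p=1$ is the only integer value to except are exactly the points the paper makes.
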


\begin{proof} $1^\circ$. When $f\in L_p (\Omega )$, then
  $u\in H^{2a}_{p,\gamma }(\Omega)\subset H^{2a}_p(\Omega )$ by
  Theorem 3.1 $1^\circ$. 
Now when $p>\frac n{2a}$, Sobolev embedding gives that $u\in
C^{2a-\frac np}(\comega)$, except when $2a-\frac np=1$, where it gives $u\in
  C^1_*(\comega)$. Since \`a fortiori
 $p>\frac 1{2a}$, we see from \eqref{2.6} that $\gamma u=0$ in
  $H^{2a}_p(\comega)$, hence in $C^{2a-\frac np}(\comega)$ resp.\ $C^1_*(\comega)$.

$2^\circ$. When $f\in L_\infty (\Omega )$, then $f\in L_p (\Omega )$
for all $1<p<\infty $. Using $1^\circ$ and letting $p\to\infty $, we conclude that $u\in C^{2a-0}(\comega)$.
\end{proof}

\begin{corollary}\label{Corollary3.6} Let $k\in{\mathbb N}_0$, and let $2k<\alpha <2k+2$. If $f\in C^\alpha
(\comega )$ with $\gamma A^lf=0$ for $l\le k$, then the solution  $u$
of {\rm \eqref{3.1}} satisfies:
\begin{equation}
u\in C^{\alpha +2a-0}(\comega)\text{ with }\begin{cases} \gamma A^lu=0\text{
for }l\le k \text{ if }\alpha +2a\le 2k+2,\\
\gamma A^lu=0\text{
for }l\le k+1 \text{ if }\alpha +2a> 2k+2.\end{cases} \label{3.4}
\end{equation}

\end{corollary}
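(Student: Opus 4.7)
The plan is to reduce this corollary to Theorem \ref{Theorem3.2} via Besov and Sobolev embeddings, very much in the spirit of Corollary \ref{Corollary3.5}. On the bounded domain $\comega$ one has $C^\alpha(\comega)\subset C^\alpha_*(\comega)=B^\alpha_{\infty,\infty}(\comega)$, and the latter embeds into $H^{\alpha-\varepsilon}_p(\Omega)$ for every $1<p<\infty$ and every $\varepsilon>0$ (via the chain $B^\alpha_{\infty,\infty}\subset B^\alpha_{p,\infty}\subset B^{\alpha-\varepsilon}_{p,2}\subset F^{\alpha-\varepsilon}_{p,2}=H^{\alpha-\varepsilon}_p$ on a bounded domain). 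For $p$ sufficiently large and $\varepsilon$ sufficiently small, $2k+\tfrac1p<\alpha-\varepsilon<2k+2+\tfrac1p$; since the hypothesis $\gamma A^lf=0$ for $l\le k$ is meaningful classically (each $A^lf$ sits in $C^{\alpha-2l}$ with $\alpha-2l>0$), Definition \ref{def2.1} places $f$ in $H^{\alpha-\varepsilon}_{p,\gamma,A}(\Omega)$.

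With this in hand I would apply Theorem \ref{Theorem3.2}\,$1^\circ$ with $s=\alpha-\varepsilon$ to conclude $u\in H^{\alpha+2a-\varepsilon}_{p,\gamma,A}(\Omega)$. Then the Sobolev embedding $H^{s}_p(\Omega)\subset C^{s-n/p}_*(\comega)$, applied with $s=\alpha+2a-\varepsilon$, gives H\"older regularity that improves as $p\to\infty$ and $\varepsilon\to 0$; passing to these limits yields the claimed $u\in C^{\alpha+2a-0}(\comega)$.

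It remains to read off the boundary vanishings from the membership $u\in H^{\alpha+2a-\varepsilon}_{p,\gamma,A}(\Omega)$. By Definition \ref{def2.1}, this space enforces $\gamma A^lu=0$ precisely for those $l$ with $2l<\alpha+2a-\varepsilon-\tfrac1p$. For small $\varepsilon$ and large $p$ this selects $l\le k$ when $\alpha+2a\le 2k+2$ and $l\le k+1$ when $\alpha+2a> 2k+2$, which is exactly the dichotomy in \eqref{3.4}. The main piece of care is the borderline $\alpha+2a=2k+2$: here $\alpha+2a-\varepsilon-\tfrac1p<2k+2$, so we land in the first alternative $l\le k$, consistent with the non-strict inequality in the statement. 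No genuinely new ingredient is required beyond Theorem \ref{Theorem3.2} and the embedding theorems recalled just above the corollary; the only subtle point is the book-keeping of which boundary conditions survive in the limit $\varepsilon\to 0$, $p\to\infty$.
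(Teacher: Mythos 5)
Your proposal is correct and follows essentially the same route as the paper's proof: embed $C^\alpha(\comega)$ into $H^{\alpha-\varepsilon}_p(\Omega)$ for all $p$, use the hypothesis $\gamma A^lf=0$ ($l\le k$) to place $f$ in $H^{\alpha-\varepsilon}_{p,\gamma,A}(\Omega)$, apply the homeomorphism \eqref{2.7} (of which Theorem \ref{Theorem3.2}\,$1^\circ$ is just the packaged form), and then let $p\to\infty$, $\varepsilon\to 0$ while reading off from Definition \ref{def2.1} which traces survive. Your case bookkeeping at $\alpha+2a\le 2k+2$ versus $\alpha+2a>2k+2$ is exactly the dichotomy in the paper's argument.
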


\begin{proof}
 When $f\in C^\alpha (\comega)$, then $f\in H^{\alpha
-\varepsilon }_p(\Omega )$ for all $p$, all $\varepsilon >0$. For
$\varepsilon $ so small that $\alpha -\varepsilon >2k$, we see from
\eqref{2.6} that since $\gamma A^lf=0$ for $l\le k$, $f\in H^{\alpha
-\varepsilon }_{p,\gamma,A }(\Omega)$. Then it follows from \eqref{2.7}
that $u\in H^{\alpha +2a-\varepsilon }_{p,\gamma }(\Omega)$. 

If
$\alpha +2a>2k+2$, we have for $\varepsilon $ so small that $\alpha
+2a-\varepsilon >2k+2$, and then $\frac1p$ sufficiently small, that
$u$ satisfies the boundary conditions $\gamma A^lu=0$ for $l\le
k+1$. For $p\to \infty $, this implies that $u\in C^{\alpha
+2a-0}(\comega)$ satisfying these boundary conditions.

If $\alpha +2a\le 2k+2$, we have for $\varepsilon $ in a small
interval $\,]0,\varepsilon _0[\,$ that $2k<\alpha
+2a-\varepsilon <2k+2$, and then for all $p$ sufficiently small, that
$u$ satisfies the boundary conditions $\gamma A^lu=0$ for $l\le
k$. For $p\to \infty $, this implies that $u\in C^{\alpha
+2a-0}(\comega)$ satisfying those boundary conditions.  
\end{proof}

The regularity results of Caffarelli and Stinga \cite{CS14} are
concerned with cases assuming
much less smoothness of the domain and coefficients, getting results
in  H\"older spaces of low order ($<2$). See also Section \ref{secFurther}.

The above results deduced from \cite{S72} explain the role of boundary
conditions on $f$. The results in H\"older spaces
resemble the results of \cite{CS14} for the values of $\alpha
$ considered there, however with a loss of sharpness (the '$-0$') in
some of
the estimates in Corollary \ref{Corollary3.6}.

\section{Consequences for Neumann-type problems}\label{secNeu}

The proofs are analogous for a Neumann-type boundary operator $B$
($j=1$ in \eqref{2.1}ff.).

\begin{theorem}\label{Theorem4.1} Let $0<a<1$. Let $u\in D_p((A_{B })^a)$ be the solution of
\begin{equation}
(A_{B })^au=f,\label{4.1}
\end{equation}
where  $f\in H^{s}_{p}(\Omega )$ for some $s\ge 0$.

$1^\circ$ If $s<1+\frac1p$, then $u\in H^{s+2a}_{p,B }(\Omega )$.

One has for any $k\in {\mathbb N}_0$: 

$2^\circ$ If
$2k+1+\frac1p<s<2k+3+\frac1p$, and $B A^lf=0$ for $l=0,1,\dots ,k$
(i.e., $f\in H^{s}_{p,B,A }(\Omega )$),
then $u\in H^{s+2a}_{p,B,A }(\Omega )$.

On the other hand, if $u\in H^{s+2a}_{p }(\Omega )$, then necessarily $B
A^lf=0$ for $l=0,1,\dots ,k$ (and hence  $f\in H^{s}_{p,B,A
}(\Omega )$ and $u\in H^{s+2a}_{p,B,A }(\Omega )$). 

$3^\circ$ Let $s=2k+1+\frac1p$. If $f\in H^{s}_{p,B,A }(\Omega
)$, then $u\in H^{s+2a}_{p,B,A }(\Omega )$. 
On the other hand, if $u\in H^{s+2a}_{p }(\Omega )$, then 
necessarily $f\in  H^{s}_{p,B,A }(\Omega )$ and  $u\in
H^{s+2a}_{p,B,A }(\Omega )$. 

\end{theorem}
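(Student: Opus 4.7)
The plan is to mirror the arguments of Theorems \ref{Theorem3.1} and \ref{Theorem3.2}, replacing $j=0$ by $j=1$ throughout so that the exceptional thresholds $\tfrac1p+2k$ are shifted to $1+\tfrac1p+2k$. Since Definition \ref{def2.1} and Corollary \ref{Corollary2.3} were stated uniformly for $j\in\{0,1\}$, the whole machinery transfers; only the placement of the compatibility conditions $BA^lu=0$ changes.

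For $1^\circ$, if $s<1+\tfrac1p$, Definition \ref{def2.1} gives $H^{s}_{p,B,A}(\Omega)=H^{s}_p(\Omega)$, so $f\in H^{s}_{p,B,A}(\Omega)$ automatically and Corollary \ref{Corollary2.3} yields $u=(A_B)^{-a}f\in H^{s+2a}_{p,B,A}(\Omega)$.

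For $2^\circ$, I would argue by induction on $k$, just as in Theorem \ref{Theorem3.2}. Sufficiency is immediate from Corollary \ref{Corollary2.3} once the hypotheses place $f$ in $H^{s}_{p,B,A}(\Omega)$. For necessity at level $k$, note that $s>2k+1+\tfrac1p>2k+1+\tfrac1p-\varepsilon$, so the inductive hypothesis (or $1^\circ$ when $k=0$) applied with $s'=2k+1+\tfrac1p-\varepsilon$ gives $u\in H^{2k+1+\frac1p+2a-\varepsilon}_{p,B,A}(\Omega)$ for every $\varepsilon>0$. Choosing $\varepsilon<2a$ forces $BA^lu=0$ for $l\le k$. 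Then Definition \ref{def2.1} places $u$ in $H^{\sigma}_{p,B,A}(\Omega)$ for any $\sigma$ with $2k+1+\tfrac1p+2a<\sigma<\min\{s+2a,\,2k+3+\tfrac1p\}$ (such $\sigma$ exist because $a<1$), and reading Corollary \ref{Corollary2.3} the other way gives $f\in H^{\sigma-2a}_{p,B,A}(\Omega)$ with $\sigma-2a>2k+1+\tfrac1p$, so $BA^lf=0$ for $l\le k$.

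For $3^\circ$ at the exceptional value $s=2k+1+\tfrac1p$, the forward direction is again a direct application of Corollary \ref{Corollary2.3}, provided one interprets ``$f\in H^{s}_{p,B,A}(\Omega)$'' via the generalized boundary condition $B_1A^kf\in\dot H^{1/p}_p(\comega)$ appearing in the fourth clause of Definition \ref{def2.1}. For the converse, apply $2^\circ$ with $s'=2k+1+\tfrac1p-\varepsilon$ to get $u\in H^{s'+2a}_{p,B,A}(\Omega)$, choose $\varepsilon<2a$ so that $BA^lu=0$ for $l\le k$, and then observe that $s+2a=2k+1+\tfrac1p+2a$ lies strictly inside $\,]2k+1+\tfrac1p,\,2k+3+\tfrac1p[\,$ because $a<1$. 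Thus $u$ sits in the ``regular'' range above the exceptional value, so $u\in H^{s+2a}_{p,B,A}(\Omega)$ and hence $f=(A_B)^au\in H^{s}_{p,B,A}(\Omega)$.

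The main point requiring care is $3^\circ$: one must make sure that the generalized condition from \eqref{2.5} is genuinely the space produced by Corollary \ref{Corollary2.3} at the exceptional parameter, and that in the converse direction the target regularity $s+2a$ always lands strictly inside the next ordinary interval (which is exactly where the hypothesis $a<1$ is used). The remainder is a routine transcription of the Dirichlet proof with $j=0$ replaced by $j=1$.
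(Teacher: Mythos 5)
Your proposal is correct and is exactly the route the paper takes: the paper gives no separate proof of Theorem \ref{Theorem4.1}, stating only that ``the proofs are analogous for a Neumann-type boundary operator $B$ ($j=1$ in \eqref{2.1}ff.)'', and your transcription of Theorems \ref{Theorem3.1} and \ref{Theorem3.2} with the thresholds shifted from $2k+\tfrac1p$ to $2k+1+\tfrac1p$ is that intended argument, including the correct use of $a<1$ to find the intermediate $\sigma$ and to place $s+2a$ inside the next ordinary interval in $3^\circ$. The only cosmetic slip is that at $k=0$ the reduction in $3^\circ$ invokes $1^\circ$ rather than $2^\circ$, which you effectively acknowledge and which matches the paper's own wording in Theorem \ref{Theorem3.2}.
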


Define
\begin{equation}
C^\infty _{B,A }(\comega)=\{u\in C^\infty (\comega)\mid B
A^ku=0\text{ for all }k\in{\mathbb N}_0\}.\label{4.2}
\end{equation}

\begin{corollary}\label{Corollary4.2} The operator $(A_B )^a$
defines a homeomorphism of $C^\infty _{B,A }(\comega)$ onto
itself.

Moreover, if $u\in H^{2a}_{p,B,A }(\Omega )\cap C^\infty (\comega)$ for
some $p$, then $(A_B )^au\in C^\infty (\comega)$ implies $u\in
C^\infty _{B,A }(\comega)$ (and hence $(A_B )^au \in C^\infty _{B,A }(\comega)$). 
\end{corollary}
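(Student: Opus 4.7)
The plan is to mimic the proof of Corollary \ref{Corollary3.3}, adapting the bookkeeping to the Neumann case $j=1$. The central identity to establish is the analog of \eqref{3.3}:
\[
C^\infty _{B,A }(\comega)=\bigcap_{s\ge 0} H^{s}_{p,B,A }(\Omega ),
\]
for any fixed $p\in(1,\infty)$. For the inclusion ``$\subset$'', note that for any $s\ge 0$ one can choose $k\in \N_0$ with $s-\tfrac1p<1+2(k+1)$; then a function $u\in C^\infty (\comega)$ satisfying $BA^lu=0$ for all $l\in\N_0$ automatically lies in $H^s_p(\Omega )$ and meets the finitely many boundary conditions that define $H^s_{p,B,A}(\Omega )$. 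For ``$\supset$'', fix $k\in\N_0$ and consider $s=2k+1+\tfrac1p-\varepsilon$; since $H^{s}_{p,B,A}(\Omega )$ embeds into $C^N(\comega)$ for $N<s-\tfrac np$ and forces $BA^lu=0$ for all $l\le k$ once $s>1+2k+\tfrac1p$, intersecting over $k\to\infty$ and small $\varepsilon>0$ gives membership in $C^\infty _{B,A}(\comega)$.

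Next, I would use Corollary \ref{Corollary2.3} applied to the Neumann realization: $(A_B)^a$ is a homeomorphism $H^{s+2a}_{p,B,A}(\Omega )\simto H^{s}_{p,B,A}(\Omega )$ for every $s\ge 0$. Taking intersections over $s\ge 0$, and using the identity of the previous step for the same $p$, yields that $(A_B)^a$ restricts to a bijective continuous map $C^\infty _{B,A}(\comega)\to C^\infty _{B,A}(\comega)$ with continuous inverse $(A_B)^{-a}$ (continuity being understood in the natural Fr\'echet topology given by the family of $H^s_{p,B,A}$-seminorms, equivalently the standard $C^\infty $-topology).

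For the second assertion, let $u\in H^{2a}_{p,B,A}(\Omega )\cap C^\infty (\comega)$ with $f:=(A_B)^au\in C^\infty (\comega)$. Since $u\in C^\infty (\comega)\subset H^{t}_p(\Omega )$ for every $t\ge 0$, in particular $u\in H^{s+2a}_p(\Omega )$ for arbitrarily large $s$, Theorem \ref{Theorem4.1} $2^\circ$ (resp.\ $3^\circ$) applied with arbitrarily large $k$ forces $BA^lf=0$ for every $l\in\N_0$, so $f\in C^\infty _{B,A}(\comega)$. The first part of the corollary then gives $u=(A_B)^{-a}f\in C^\infty _{B,A}(\comega)$.

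The only real obstacle is the combinatorial bookkeeping in the first step: the definition of $H^s_{p,B,A}(\Omega )$ has the three non-exceptional regimes plus the borderline case, and for Neumann ($j=1$) the thresholds are shifted by $1$ compared with Dirichlet, so one must verify carefully that the Sobolev embedding leaves enough room to capture every boundary condition $BA^lu=0$ as $k\to\infty$. Once the intersection identity is in place, the rest is a formal repetition of the Dirichlet argument and nothing new is required.
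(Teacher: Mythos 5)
Your proposal is correct and follows exactly the route the paper intends: the paper gives no separate proof of Corollary \ref{Corollary4.2}, stating only that the Neumann arguments are analogous to those of Section \ref{secDir}, and your adaptation of the proof of Corollary \ref{Corollary3.3} (the intersection identity $C^\infty_{B,A}(\comega)=\bigcap_{s\ge0}H^s_{p,B,A}(\Omega)$, then Corollary \ref{Corollary2.3}, then Theorem \ref{Theorem4.1} with arbitrarily large $k$) is precisely that analogue. The only blemish is an off-by-one in the ``$\supset$'' step, where $s=2k+1+\tfrac1p-\varepsilon$ forces $BA^lu=0$ only for $l\le k-1$ rather than $l\le k$; taking $s=2(k+1)+1+\tfrac1p-\varepsilon$ instead repairs this, and the intersection over all $k$ makes the conclusion unaffected.
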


\begin{corollary}\label{Corollary4.3}
$1^\circ$ 
Let $f\in L_p(\Omega )$ with $\frac n p <2a$. If $2a-\frac n p \ne 1$, 
resp.\ $=1$, then
the solution $u$ of {\rm \eqref{4.1}} is in $C^{2a-\frac np}(\comega)$, resp.\
$C^1_*(\comega)$, with $B
u=0$ if $2a-\frac n p>1$.

$2^\circ$ If $f\in L_\infty (\Omega )$,
then the solution $u$ of {\rm (4.1)} is in $C^{2a-0}(\comega)$, with
$B u=0$ precisely when $a>\frac12$.
\end{corollary}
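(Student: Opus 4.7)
The strategy is to transcribe the proof of Corollary \ref{Corollary3.5} into the Neumann setting, using Theorem \ref{Theorem4.1} in place of Theorem \ref{Theorem3.1}. The essential change is that the relevant threshold in Definition \ref{def2.1} shifts: the boundary condition is built into $H^s_{p,B}(\Omega )$ only once $s>1+\tfrac1p$, whereas for Dirichlet it was encoded as soon as $s>\tfrac1p$. This shift by $1$ accounts for the new threshold $2a-\tfrac n p>1$ (rather than $>0$) that guarantees $Bu=0$.

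For $1^\circ$, given $f\in L_p(\Omega )$ with $\tfrac n p<2a$, first apply Theorem \ref{Theorem4.1} $1^\circ$ with $s=0$ (valid since $0<1+\tfrac1p$) to conclude $u\in H^{2a}_{p,B}(\Omega )\subset H^{2a}_p(\Omega )$. Sobolev embedding then produces $u\in C^{2a-n/p}(\comega )$, with the Zygmund space $C^1_*(\comega )$ appearing in the borderline case $2a-\tfrac n p=1$. To upgrade the membership $u\in H^{2a}_{p,B}(\Omega )$ to the classical identity $Bu=0$, I invoke the second line of \eqref{2.6} with $j=1$, $k=0$: when $1+\tfrac1p<2a<3+\tfrac1p$, the definition already places the constraint $Bu=0$ on elements of $H^{2a}_{p,B}(\Omega )$. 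The hypothesis $2a>1+\tfrac1p$ is guaranteed by $2a-\tfrac n p>1$ since $n\ge 1$ implies $\tfrac n p\ge\tfrac1p$, and the identity then persists in the H\"older space.

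For $2^\circ$, if $f\in L_\infty (\Omega )\subset L_p(\Omega )$ for every $1<p<\infty $, letting $p\to\infty $ in $1^\circ$ yields $u\in C^{2a-0}(\comega )$. The boundary condition $Bu=0$ is available through $1^\circ$ as soon as $p$ can be chosen with $2a-\tfrac n p>1$, which is possible for $p$ large precisely when $2a>1$, i.e.\ $a>\tfrac12$. I expect the only subtlety to be the ``precisely when'' assertion in the second part: for $a\le\tfrac12$, the conclusion $u\in C^{2a-0}(\comega )$ only delivers regularity strictly below $C^1$, so the first-order operator $B_1$ appearing in $Bu=\gamma _0B_1u$ has no classical boundary trace on $u$, and the Neumann identity cannot even be formulated in the present H\"older-scale setting. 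This clarifies the dichotomy rather than requiring new estimates; the remainder is a direct translation of the Dirichlet argument.
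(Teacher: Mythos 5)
Your proof is correct and follows essentially the same route as the paper: Theorem \ref{Theorem4.1} $1^\circ$ with $s=0$, Sobolev embedding, the observation that $2a-\tfrac np>1$ forces $2a>1+\tfrac1p$ so that \eqref{2.6} with $j=1$ yields $Bu=0$, and then $p\to\infty$ for part $2^\circ$. The only (immaterial) divergence is in justifying ``precisely when'' for $a\le\tfrac12$: the paper notes that $2a\le 1<1+\tfrac1p$ for every $p$, so $H^{2a}_{p,B}(\Omega)=H^{2a}_p(\Omega)$ and no boundary condition is imposed on the domain, whereas you argue that $u\in C^{2a-0}(\comega)$ is below the regularity needed to form the trace of the first-order operator $B_1$ --- two equivalent ways of making the same point.
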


\begin{proof} $1^\circ$ It is seen as in Corollary \ref{Corollary3.5} that $u\in
C^{2a-\frac np}(\comega)$ resp.\ $C^1_*(\comega)$. If $2a-\frac n p>1$, then
\`a fortiori $2a-\frac 1 p>1$, and $Bu=0$ in $H^{2a}_p(\Omega )$; this
carries over to the space we embed in.

 $2^\circ$. When $f\in L_\infty (\Omega )$, then $f\in L_p (\Omega )$
for all $1<p<\infty $, so we have $1^\circ$ for all $p$. 
Letting $p\to\infty
$, we conclude that $u\in C^{2a-0}(\comega)$, and $Bu=0$ is assured if
$2a>1$.
When  $a\le \frac12$, then $2a\le 1<1+\frac1p$ for all $p$, so $H^{2a}_{p,B }(\Omega)=H^{2a}_{p}(\Omega)$ for all
$p$; no boundary condition is imposed. 
\end{proof}

\begin{corollary}\label{Corollary4.4} Let $k\in{\mathbb N}_0$, and let $\alpha \ge 0$
satisfy $2k-1<\alpha
<2k+1$.

If $f\in C^\alpha
(\comega )$ with $B A^lf=0$ for $l\le k-1$, then the solution  $u$
of {\rm \eqref{4.1}} satisfies:
\begin{equation}
u\in C^{\alpha +2a-0}(\comega)\text{ with }\begin{cases} B A^lu=0\text{
for }l\le k -1\text{ if }\alpha +2a\le 2k+1,\\
B A^lu=0\text{
for }l\le k\text{ if }\alpha +2a> 2k+1.\end{cases} \label{4.3}
\end{equation}

\end{corollary}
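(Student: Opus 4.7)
The plan is to mimic the argument for Corollary \ref{Corollary3.6}, but with the shift in boundary-condition indices appropriate for $j=1$, which is reflected in Definition \ref{def2.1}. First I would translate the Hölder hypothesis into a Sobolev hypothesis: since $f\in C^\alpha(\comega)$ we have $f\in H^{\alpha-\varepsilon}_p(\Omega)$ for every $p\in(1,\infty)$ and every $\varepsilon>0$. Choose $\varepsilon$ and $1/p$ so small that
\[
2k-1\;<\;\alpha-\varepsilon-\tfrac1p\;<\;2k+1,
\]
which is possible because $\alpha>2k-1$. According to Definition \ref{def2.1} for $j=1$, membership in $H^{\alpha-\varepsilon}_{p,B,A}(\Omega)$ then amounts to $BA^lf=0$ for $l\le k-1$, which is exactly the hypothesis (vacuous when $k=0$). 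Hence $f\in H^{\alpha-\varepsilon}_{p,B,A}(\Omega)$, and by the homeomorphism \eqref{2.7},
\[
u=(A_B)^{-a}f\;\in\; H^{\alpha+2a-\varepsilon}_{p,B,A}(\Omega).
\]

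Next I would split according to whether $\alpha+2a$ lies above or below the threshold $2k+1$, which governs how many boundary conditions the space $H^{\alpha+2a-\varepsilon}_{p,B,A}$ actually encodes. If $\alpha+2a>2k+1$, then for $\varepsilon$ and $1/p$ small enough we have $2k+1<\alpha+2a-\varepsilon-\tfrac1p<2k+3$ (the upper bound uses $a<1$ together with $\alpha<2k+1$), so Definition \ref{def2.1} forces $BA^lu=0$ for $l\le k$. If instead $\alpha+2a\le 2k+1$, then for $\varepsilon,1/p$ small we stay in the range $2k-1<\alpha+2a-\varepsilon-\tfrac1p<2k+1$, and the definition yields only $BA^lu=0$ for $l\le k-1$. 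The borderline case $\alpha+2a=2k+1$ is covered because the strict inequality $\alpha+2a\le 2k+1$ still places $\alpha+2a-\varepsilon-\tfrac1p$ strictly below $2k+1$.

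Finally I would pass from Sobolev to Hölder regularity by Sobolev embedding $H^{\alpha+2a-\varepsilon}_p(\Omega)\subset C^{\alpha+2a-\varepsilon-n/p}_*(\comega)$ and then let $p\to\infty$ and $\varepsilon\to 0$, obtaining $u\in C^{\alpha+2a-0}(\comega)$. Because the boundary conditions $BA^lu=0$ are closed conditions in each $H^s_p$-space and are preserved by passage to the continuous/Hölder limit (they are traces of continuous functions), they persist in the limit, giving exactly the two alternatives in \eqref{4.3}.

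The only real obstacle is bookkeeping: one must verify that the intervals for $\alpha-\varepsilon-\tfrac1p$ and $\alpha+2a-\varepsilon-\tfrac1p$ can be arranged simultaneously as $\varepsilon,1/p\to 0^+$, and that the borderline values $\alpha+2a=2k+1$ or $\alpha\in\{2k-1,2k+1\}$ do not cause the chosen intervals to collapse. This is handled by $a<1$ (which leaves room of length $2a<2$ in the target window) and by the strict inequalities in the hypothesis $2k-1<\alpha<2k+1$.
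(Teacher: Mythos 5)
Your proposal is correct and follows essentially the same route as the paper: the paper omits the proof of Corollary \ref{Corollary4.4}, stating only that the Neumann case is analogous to the Dirichlet case, and your argument is precisely the $j=1$ analogue of the paper's proof of Corollary \ref{Corollary3.6} (embed $C^\alpha$ into $H^{\alpha-\varepsilon}_p$, identify the boundary conditions via Definition \ref{def2.1}, apply the homeomorphism \eqref{2.7}, split on whether $\alpha+2a$ exceeds the threshold $2k+1$, and let $p\to\infty$). Your explicit tracking of the $\tfrac1p$ shift in the thresholds is, if anything, slightly more careful than the paper's wording.
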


In the case of $(-\Delta _{\operatorname{Neu}})^a$ considered on a
connected set $\Omega $, there is a
one-dimensional nullspace consisting of the constants (that are of
course in $C^\infty (\comega)$). This case is included in the above
results by a trick found in \cite{S71}: Replace $-\Delta $ by 
\begin{equation}
A=-\Delta +E_0, \quad E_0u= \frac 1{\operatorname{vol}(\Omega )}\int_{\Omega }u(x)\,dx;\label{4.4}
\end{equation}
note that $E_0$ is a projection onto the constants, orthogonal in
$L_2(\Omega )$ (it is also a pseudodifferential operator of order $-\infty
$). Here $\Delta E_0=0$ and $\gamma _1E_0=0$, where $
\gamma
_1u=\partial _nu|_{\partial\Omega }$.  With $B=\gamma _1$,
 $(A_{\gamma _1})^a$ equals $(-\Delta
_{\gamma _1})^a+E_0$ and is invertible, and the above results apply to it
and lead to similar regularity results for $(-\Delta _{\gamma _1})^a$
itself
(note that $\gamma _1A^ku=\gamma _1(-\Delta )^ku$).

\section{Further developments}\label{secFurther}

\subsection{More general function spaces}

The above theorems in $L_p$ Sobolev spaces are likely to extend to a large number of other
scales of function spaces. Notably, it seems possible to extend them
to the scale of Besov spaces $B^s_{p,q}$ with
$1\le p\le \infty $,  $1\le q< \infty $, since the decisive complex
interpolation properties of domains of elliptic realizations have been
shown by Guidetti in [G91]. 

It is not at the moment clear to the author whether the scale $B^s_{\infty
,\infty }=C^s_*$
of 
 H\"older-Zygmund spaces, or the scale of ``small'' H\"older-Zygmund
 spaces $c^s_*$ (obtained by closure in $C^s_*$-spaces of the compactly
 supported smooth functions), cf.\ e.g.\ Escher and Seiler
 \cite{ES08}, can be or has been included for these
 boundary value problems. (It was possible to include $C^s_*$ in the regularity
study for the restricted fractional Laplacian in \cite{G14} using
Johnsen \cite{J96}.) Such an extension
would allow removing the '$-0$' in some formulas in Corollaries \ref{Corollary3.6}
and \ref{Corollary4.4} above.

Let us mention for cases {\it without} boundary conditions, that the continuity of classical pseudodifferential operators
on ${\mathbb R}^n$ (such as $(-\Delta )^a$ and its parametrices) in H\"older-Zygmund spaces has been known for many years, cf.\ e.g.\
Yamazaki \cite{Y86} for a more general result and references to
earlier contributions. On this point, \cite{CS14} refers to Caffarelli
and Silvestre \cite{CS07}.

\subsection{Nonsmooth situations}

It is of great interest to treat the problems also when the set
$\Omega $ and the coefficients of $A$ have only limited smoothness.
One of the common strategies is to transfer the results known for
constant-coefficient operators on $\rnp$ to to variable-coefficient operators by perturbation
arguments, and to sets $\Omega $ by local
coordinates. (This strategy is used in \cite{CS14}.)
The pseudodifferential theory in smooth cases is in fact set up to
incorporate the perturbation arguments in a systematic and more
informative calculus. For nonsmooth cases,
we remark that there do exist
pseudodifferential theories requiring only limited smoothness in $x
$, cf.\
\cite{AGW14} and other works of Abels listed there. Applications to
the present problems await development.

Another point of view comes forward in the efforts to establish
so-called maximal regularity, $H^\infty $-calculus and  $R$-boundedness
properties for operators generating semigroups; see e.g.\ Denk,
Hieber and Pr\"uss \cite{DHP03} for results, references to the vast literature, and an
overview of the theory. Fractional powers of boundary problems entered
in this theory at an early stage, starting with Seeley's results, but
are not so much in focus in the latest developments, that are
primarily aimed towards solvability of parabolic problems. 

However, there is
an interesting result by Yagi \cite{Y08} that is relevant for the present
purposes. He considers an operator
\begin{equation}
A=-\sum_{j,k=1,\dots,n}\partial_ja_{jk}(x)\partial_k+c(x),\text{ with
}\sum_{j,k=1,\dots,n}a_{jk}(x)\xi _j\xi _j\ge c_0|\xi |^2 ,\label{5.1}
\end{equation}
 $a_{jk}=a_{kj}$ real in $C^1(\comega)$, $c(x)$ real bounded $\ge 0$ and $c_0>0$,
on a
bounded $C^2$-domain $\Omega \subset \rn$.  
Define
\begin{equation}
 H^s_{p,\gamma }(\Omega
)= \begin{cases} H^s_p(\Omega )\text{ for }0\le s<\tfrac1p,\\
 \{u\in H^s_p(\Omega )\mid \gamma u=0\}\text{ for }\tfrac1p<s\le 2.\end{cases}\label{5.2}
\end{equation}
Since
$A=-\sum_{j,k}(a_{jk}\partial_j\partial_k+(\partial_ja_{jk})\partial_k)+c$
with $a_{jk}\in C^1$ and  $\partial a_{jk}\in C^0$, it follows
from Denk, Dore, Hieber, Pr\"uss and Venni \cite{DDHPV04} Th.\ 2.3, for $1<p<\infty $, that the Dirichlet
realization $A_{\gamma }$ of $A$ in $L_p(\Omega )$ with domain 
\begin{equation*}
D_p(A_{\gamma })=H^2_{p,\gamma }(\Omega ),
\end{equation*}
admits a bounded $H^\infty $-calculus in $L_p(\Omega )$. We here use
that for $p=2$, $A_{\gamma }$ is selfadjoint in $L_2(\Omega )$ with a positive lower
bound (since $\Omega $ is bounded), hence the constant $\mu 
_\phi $ in the theorem can be taken equal to 0.
We also observe that the definitions of the operators for various $p$ are consistent
(and they all have the same eigenvector system).

Combined with the existence of an
$H^\infty $-calculus, Theorem 5.2 of \cite{Y08} then shows:

\begin{theorem}\label{Theorem5.1} Let $1<p<\infty $. For $0\le a\le 1$, the
fractional  powers
$(A_{\gamma })^a$ in $L_p(\Omega )$ have domains
\begin{equation}
D_p((A_{\gamma })^a)=\begin{cases} H^{2a}_p(\Omega ) \text{ if }0\le 2a<\tfrac1p,\\ 
H^{2a}_{p,\gamma }(\Omega ) \text{ if }\tfrac1p<2a\le 2,\; 2a\ne
1+\tfrac1p.
\end{cases} \label{5.3}
\end{equation}

\end{theorem}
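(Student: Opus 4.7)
The plan is to combine two ingredients. First, by the quoted Theorem 2.3 of \cite{DDHPV04}, the Dirichlet realization $A_\gamma$ in $L_p(\Omega )$ (with $1<p<\infty $) admits a bounded $H^\infty $-calculus with spectral angle $0$ (after noting that on a bounded $\Omega $, selfadjointness and positivity in $L_2$ suffice to take $\mu_\phi =0$; by consistency this transfers to the $L_p$-setting). Second, Yagi's Theorem 5.2 from \cite{Y08} asserts that whenever a sectorial operator $T$ on a Banach space $X$ admits a bounded $H^\infty $-calculus, the domains of its fractional powers coincide with the complex interpolation spaces, namely $D(T^a)=[X,D(T)]_a$ for $0\le a\le 1$. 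Applying this with $T=A_\gamma $ and $X=L_p(\Omega )$ gives
\begin{equation*}
D_p((A_\gamma )^a)=[L_p(\Omega ),H^2_{p,\gamma }(\Omega )]_a,\qquad 0\le a\le 1,
\end{equation*}
using the identification $D_p(A_\gamma )=H^2_{p,\gamma }(\Omega )$ that is implicit in the DDHPV hypothesis.

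The remaining task is to identify the complex interpolation space on the right-hand side and match it to the piecewise description \eqref{5.3}. I would proceed by considering two cases separated by the threshold $2a=\tfrac1p$, exactly as in Seeley's original analysis: when $2a<\tfrac1p$, the trace $\gamma u$ is not well-defined on all of $H^{2a}_p(\Omega )$, and the interpolation space equals $H^{2a}_p(\Omega )$; when $\tfrac1p<2a\le 2$ with $2a\neq 1+\tfrac1p$, the trace survives under interpolation and the space equals $\{u\in H^{2a}_p(\Omega )\mid \gamma u=0\}=H^{2a}_{p,\gamma }(\Omega )$. These identifications are the standard interpolation formulas for $L_p$-Sobolev spaces on a smooth domain with a Dirichlet boundary condition; since $\Omega $ is of class $C^2$ and the boundary operator is just the zero-order trace $\gamma $, the required $C^2$-regularity of $\partial\Omega $ is sufficient to invoke them (either directly from results in the literature on Sobolev spaces on Lipschitz/$C^k$-domains, or by a standard localization-plus-flattening argument reducing to the half-space $\rnp$, where the interpolation identity is classical).

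The main obstacle, in my view, is not the functional-calculus step (which is a direct citation) but verifying carefully that the complex interpolation formula for $[L_p(\Omega ),H^2_{p,\gamma }(\Omega )]_a$ continues to hold under the reduced $C^2$-smoothness of $\partial\Omega $, with the exceptional values $2a=\tfrac1p$ and $2a=1+\tfrac1p$ excluded. Away from these thresholds, the standard flattening-the-boundary reduction preserves the Dirichlet condition and reduces matters to the half-space case, where the interpolation identity is known from the classical theory (as used in Seeley \cite{S72}, Grisvard \cite{G67}, and recalled in Triebel). The excluded critical values $2a=\tfrac1p$ (where the trace fails to be bounded) and $2a=1+\tfrac1p$ (where a generalized boundary condition of the type \eqref{2.5} would be needed) correspond exactly to the gaps in the statement \eqref{5.3}, and need not be analyzed here.
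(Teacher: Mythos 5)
Your proposal is correct in outline and its first step coincides exactly with the paper's: both invoke \cite{DDHPV04}, Th.\ 2.3, to get a bounded $H^\infty$-calculus for $A_\gamma$ in $L_p(\Omega)$, using selfadjointness and positivity in $L_2$ to take the angle constant equal to $0$ and consistency across $p$. Where you diverge is in how Yagi's Theorem 5.2 is used. The paper reads that theorem as directly delivering the concrete Sobolev-space characterization \eqref{5.3} of $D_p((A_\gamma)^a)$ under the stated low-regularity hypotheses ($C^2$ domain, $C^1$ coefficients) --- this is why the paper immediately remarks that \cite{Y08} ``does not describe the excepted cases $s=\frac1p,1+\frac1p$'', which only makes sense if Yagi's statement is the concrete one. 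You instead extract only the abstract consequence of the $H^\infty$-calculus, namely $D_p((A_\gamma)^a)=[L_p(\Omega),H^2_{p,\gamma}(\Omega)]_a$ (a true and standard fact, already implied by bounded imaginary powers), and then take on the separate task of identifying this complex interpolation space with $H^{2a}_p(\Omega)$ resp.\ $H^{2a}_{p,\gamma}(\Omega)$. That identification is Seeley's interpolation theorem \cite{S72} (cf.\ \cite{G67}, \cite{G91}) in the smooth setting, but here it must be verified under only $C^2$ boundary regularity; you correctly flag this as the main obstacle and sketch a localization-and-flattening reduction, but you do not actually carry it out or pin it to a reference valid at this smoothness level. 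So your route is viable and arguably more transparent about where the analytic content lies, at the cost of re-deriving (rather than citing) precisely the piece of interpolation theory that Yagi's concrete theorem packages for you; as written, that piece remains an assertion rather than a proof.
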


(\cite{Y08} does not describe the excepted cases $s=\frac1p,1+\frac1p$.) 

With this statement we
can  repeat the proof of Theorem \ref{Theorem3.1} in cases where $s\le 2-2a$,
obtaining:

\begin{theorem}\label{Theorem5.2} (Recall the smoothness assumptions: $\Omega $ is $C^2$ and the
$a_{jk}$ are in $C^{1}(\comega)$, $c\in L_\infty (\Omega )$.)

Let $0<a<1$. Let $f\in H^{s}_{p}(\Omega )$ for
some $s\in [0, 2-2a]$, and assume that $u\in D_p((A_{\gamma })^a)$ is a solution of
\begin{equation}
(A_{\gamma })^au=f.\label{5.4}
\end{equation}
Assume that $s$ and $s+2a$ are different from $\frac1p$ and $1+\frac1p$.

$1^\circ$ If $s<\frac1p$, then $u\in H^{s+2a}_{p,\gamma }(\Omega )$.

$2^\circ$ Let $\frac1p<s\le 2-2a$. Then $u\in H^{\frac1p+2a-\varepsilon
}_{p,\gamma }(\Omega )$ for all $\varepsilon >0$. Moreover,  $u\in
H^{s+2a}_{p}(\Omega )$ if and only if $\gamma f=0$, and then in fact $u\in  H^{s+2a}_{p,\gamma
}(\Omega )$.
\end{theorem}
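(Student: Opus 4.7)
The plan is to imitate the proof of Theorem~\ref{Theorem3.1} step by step, but with Corollary~\ref{Corollary2.3} and its homeomorphism \eqref{2.7} replaced by Theorem~\ref{Theorem5.1}. Since Theorem~\ref{Theorem5.1} identifies $D_p((A_\gamma)^b)$ with $H^{2b}_{p,\gamma}(\Omega)$ only for $0\le 2b\le 2$ (excluding $2b=\tfrac1p,1+\tfrac1p$), every exponent appearing in the argument must be forced into this window; this is precisely why the hypothesis restricts $s$ to $[0,2-2a]$ and excludes $s,s+2a\in\{\tfrac1p,1+\tfrac1p\}$. The algebraic tool replacing \eqref{2.3} is the semigroup identity $(A_\gamma)^{s/2}(A_\gamma)^a=(A_\gamma)^{a+s/2}$, which is part of the $H^\infty$-calculus framework of \cite{DDHPV04}/\cite{Y08} underlying Theorem~\ref{Theorem5.1}.

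For $1^\circ$, assume $s<\tfrac1p$. Theorem~\ref{Theorem5.1} applied with exponent $s/2$ gives $D_p((A_\gamma)^{s/2})=H^s_p(\Omega)$, so $f\in D_p((A_\gamma)^{s/2})$. Then
\[
(A_\gamma)^{a+s/2}u=(A_\gamma)^{s/2}(A_\gamma)^au=(A_\gamma)^{s/2}f\in L_p(\Omega),
\]
and since $a+s/2\le 1$, a second application of Theorem~\ref{Theorem5.1} yields $u\in D_p((A_\gamma)^{a+s/2})=H^{s+2a}_{p,\gamma}(\Omega)$.

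The first assertion of $2^\circ$ follows at once from $1^\circ$ applied with $s$ replaced by $s'=\tfrac1p-\varepsilon$: we have $f\in H^s_p(\Omega)\subset H^{s'}_p(\Omega)$, hence $u\in H^{s'+2a}_{p,\gamma}(\Omega)=H^{\frac1p+2a-\varepsilon}_{p,\gamma}(\Omega)$. For the equivalence, if $\gamma f=0$ then (using $\tfrac1p<s\le 2$ and $s\neq 1+\tfrac1p$) $f\in H^s_{p,\gamma}(\Omega)=D_p((A_\gamma)^{s/2})$, and the same semigroup computation as in $1^\circ$ places $u$ in $H^{s+2a}_{p,\gamma}(\Omega)$. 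Conversely, suppose $u\in H^{s+2a}_p(\Omega)$. The a~priori estimate just proved, with $\varepsilon<2a$, forces $\gamma u=0$; choose $\sigma\in(\tfrac1p+2a,\min\{s+2a,2\}]$ avoiding the exceptional values, which is possible because $s+2a\le 2$ and $s>\tfrac1p$. Then $u\in H^\sigma_{p,\gamma}(\Omega)=D_p((A_\gamma)^{\sigma/2})$, so
\[
(A_\gamma)^{\sigma/2-a}f=(A_\gamma)^{\sigma/2}u\in L_p(\Omega),
\]
and since $\tfrac1p<\sigma-2a$ (with $\sigma-2a\neq 1+\tfrac1p$), Theorem~\ref{Theorem5.1} gives $f\in H^{\sigma-2a}_{p,\gamma}(\Omega)$; in particular $\gamma f=0$.

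The proof itself is short once the bookkeeping is set up. The main obstacle is really the bookkeeping: unlike the smooth case, where Theorem~\ref{Theorem2.2} hands out $D_p((A_\gamma)^a)$ for all $a>0$ and one can iterate via \eqref{2.3}, here Theorem~\ref{Theorem5.1} only reaches $2a\le 2$. Consequently every exponent that arises (namely $s/2$, $a+s/2$, $\sigma/2$, $\sigma/2-a$) must be checked to lie in $[0,1]$ and to stay away from the prohibited half-integers $\tfrac1{2p},\tfrac12+\tfrac1{2p}$; this is the role of the standing assumptions on $s$ and $s+2a$, and the reason no analogue of Theorem~\ref{Theorem3.2} (higher $s$, iterated boundary conditions) can be obtained without more smoothness on $\Omega$ and $a_{jk}$.
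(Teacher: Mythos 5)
Your proposal is correct and follows essentially the same route as the paper: the paper likewise reduces everything to the domain identifications of Theorem~\ref{Theorem5.1} together with the homeomorphism $(A_{\gamma })^a\colon D_p((A_{\gamma })^{t+a})\simto D_p((A_{\gamma })^{t})$ for $t\ge 0$, which is exactly your semigroup identity in disguise, and then repeats the argument of Theorem~\ref{Theorem3.1} within the window $2b\le 2$. Your bookkeeping of the exceptional exponents and of the choice of $\sigma $ matches the paper's (which takes $\sigma $ strictly below $s+2a$, a cosmetic difference), so there is nothing further to add.
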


\begin{proof} 
We first note that by the general properties of fractional powers,
\begin{equation}
(A_{\gamma })^a\colon D_p((A_{\gamma })^{t+a})\simto D_p((A_{\gamma
})^{t}), \text{ for }t\ge 0;\label{5.5}
\end{equation}
this covers part of the statements in view of Theorem \ref{Theorem5.1}.

$1^\circ$ follows from \eqref{5.5}, since $H^s_p(\Omega )=D_p((A_{\gamma })^{s/2})$
for $s<\frac1p$ and $D_p((A_{\gamma })^{s/2+a})=H^{s+2a}_{p,\gamma
}(\Omega )$, by \eqref{5.3}.

For $2^\circ$, we first note that since $s>\frac1p>\frac1p-\varepsilon $,
all $\varepsilon >0$, the preceding result shows that $u\in H^{\frac1p+2a-\varepsilon
}_{p,\gamma }(\Omega )$ for all $\varepsilon >0$.

Now if $\gamma f=0$, then $f\in   H^{s}_{p,\gamma
}(\Omega )$ by \eqref{5.1}, which equals $D_p((A_{\gamma })^{s/2})$ by
\eqref{5.3}, and hence  $u\in D_p((A_{\gamma })^{s/2+a})= H^{s+2a}_{p,\gamma
}(\Omega )$ in view of \eqref{5.5} and \eqref{5.3}.

Conversely, let $u\in  H^{s+2a}_{p}(\Omega )$. Then since we know
already that  $u\in H^{\frac1p+2a-\varepsilon
}_{p,\gamma }(\Omega )$, we see that $\gamma u=0$ (taking $\varepsilon
<2a$). Then by \eqref{5.3}, $u\in H^{\sigma 
}_{p,\gamma }(\Omega )$ for $\frac1p+2a<\sigma <\min\{
s+2a,2+\frac1p\}$; such $\sigma $ exist since $a<1$. Hence  $f\in H^{\sigma -2a
}_{p,\gamma }(\Omega )$ with $\sigma -2a>\frac1p$ and therefore has $\gamma f=0$.
\end{proof}

Case $2^\circ$ is of course only relevant when $a<1-\frac 1{2p}$.

Now one can draw corollaries exactly as in Corollaries \ref{Corollary3.5} and \ref{Corollary3.6}:

\begin{corollary}\label{Corollary5.3} Let $u$ be a solution of {\rm \eqref{5.4}}.

$1^\circ$  
Let $f\in L_p(\Omega )$ with $\frac n p <2a$, $2a\notin\{\frac1p, 1+\frac1p\}$. If $2a-\frac n p \ne 1$, 
resp.\ $=1$, then
the solution $u$ of {\rm (2.1)} is in $C^{2a-\frac np}(\comega)$, resp.\
$C^1_*(\comega)$, with $\gamma
u=0$.

$2^\circ$ If $f\in L_\infty (\Omega )$, then $u\in C^{2a-0}(\comega)$
with $\gamma u=0$.

$3^\circ$ If  $f\in C^\alpha (\comega)$ with $\gamma f=0$ for some
$\alpha \in \,]0, 2-2a]$, then $u\in C^{\alpha +2a-0}(\comega)$
with $\gamma u=0$.
\end{corollary}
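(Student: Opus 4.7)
The plan is to mimic the proofs of Corollaries \ref{Corollary3.5} and \ref{Corollary3.6} verbatim, substituting Theorem \ref{Theorem5.2} for Theorems \ref{Theorem3.1}--\ref{Theorem3.2}. The only genuinely new issue is that Theorem \ref{Theorem5.2} carries the extra hypothesis $s\le 2-2a$ on the input regularity, together with the exclusion of the values $\frac1p$ and $1+\frac1p$ for the endpoint exponents. Both restrictions turn out to be harmless because the $L_p$- or $C^\alpha $-data considered here sit at low regularity, and the exceptional values are isolated and can be bypassed by perturbing $p$ or $\varepsilon $.

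For $1^\circ$, I would take $s=0<\tfrac1p$ in Theorem \ref{Theorem5.2} $1^\circ$; this is within the allowed range since $0\le 2-2a$, and it yields $u\in H^{2a}_{p,\gamma }(\Omega )$. Sobolev embedding into $C^{2a-n/p}(\comega)$ (or $C^1_*(\comega)$ at the critical index) then gives the stated H\"older regularity, and the assumption $\tfrac np<2a$ forces $\tfrac1p<2a$, so the inclusion into $H^{2a}_{p,\gamma }(\Omega )$ already provides $\gamma u=0$. Part $2^\circ$ then follows by applying $1^\circ$ for all finite $p$ (avoiding the countable exceptional set $2a\in\{\tfrac1p,1+\tfrac1p\}$) and passing to the limit $p\to\infty $; the trace condition $\gamma u=0$ survives because it holds in $H^{2a}_p(\Omega )$ for every large $p$.

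For $3^\circ$, I would start from the embedding $C^\alpha (\comega)\subset H^{\alpha -\varepsilon }_p(\Omega )$ for each $p<\infty $ and each $\varepsilon >0$. Fix $\varepsilon $ small enough that $\tfrac1p<\alpha -\varepsilon \le 2-2a-\varepsilon $ (possible if $p$ is large), and also so that $\alpha -\varepsilon $ and $\alpha +2a-\varepsilon $ avoid the forbidden values $\tfrac1p$ and $1+\tfrac1p$. Since $\gamma f=0$, this places $f$ in $H^{\alpha -\varepsilon }_{p,\gamma }(\Omega )$, so Theorem \ref{Theorem5.2} $2^\circ$ yields $u\in H^{\alpha +2a-\varepsilon }_{p,\gamma }(\Omega )$. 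Letting $p\to\infty $ and $\varepsilon \to 0$, Sobolev embedding gives $u\in C^{\alpha +2a-0}(\comega)$, and the boundary condition $\gamma u=0$ is inherited for every $p$ large enough.

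The main (small) obstacle is the bookkeeping around the exceptional indices in Theorem \ref{Theorem5.2}; I would handle it by noting that for any target H\"older exponent one may perturb $p$ and $\varepsilon $ slightly to dodge the finite set of forbidden values without sacrificing the final conclusion (which is formulated up to an arbitrary loss `$-0$'). Otherwise, everything is a direct transcription of the smooth-case arguments, with the upper restriction $s\le 2-2a$ never binding because $s=0$ in parts $1^\circ$, $2^\circ$ and $s=\alpha \le 2-2a$ by hypothesis in part $3^\circ$.
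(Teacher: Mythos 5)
Your proposal is correct and takes essentially the same route as the paper, which simply states that the corollaries are drawn ``exactly as in Corollaries \ref{Corollary3.5} and \ref{Corollary3.6}'' using Theorem \ref{Theorem5.1} and Theorem \ref{Theorem5.2} in place of the smooth-case results. Your additional bookkeeping --- checking that $s=0$ resp.\ $s=\alpha-\varepsilon$ stays within the range $s\le 2-2a$, and that the exceptional values $\tfrac1p$, $1+\tfrac1p$ can be dodged by perturbing $p$ and $\varepsilon$ --- is exactly the verification the paper leaves implicit.
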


In the cases $2a=\frac1p$ or $1+\frac1p$ in $1^\circ$, one has at least
that $u\in H^{2a-0}_{p,\gamma }(\Omega )$, from which one concludes
$u\in C^{2a-\frac np-0}(\comega)$ with $\gamma u=0$.

It would be natural to generalize the results of Yagi to boundary problems
for higher-order
operators $A$, including integer powers of $A_\gamma $ (the latter
would make it possible to consider larger $s$ in Theorem \ref{Theorem5.2} under
increased smoothness requirements), but to
our knowledge, no
such efforts seem to have been made so far.

In  the book of Yagi \cite{Y10}, Chapter 16, there are shown
similar results for the Neumann problem; here $c(x)\ge c_1>0$ in \eqref{5.1}
and the boundary operator is the conormal derivative  
\begin{equation*}
Bu=\sum_{j,k=1,\dots,n}\nu _j\gamma (a_{jk}\partial_ku),
\end{equation*}
where $\nu =\{\nu _1,\dots,\nu _n\}$ is the normal to $\partial\Omega $.
We define
\begin{equation}
H^s_{p,B }(\Omega
)= \begin{cases} H^s_p(\Omega )\text{ for }0\le s<1+\tfrac1p,\\
 \{u\in H^s_p(\Omega )\mid B u=0\}\text{ for }1+\tfrac1p<s\le 2.\end{cases}\label{5.6}
\end{equation}
It follows from \cite{DDHPV04} Th.\ 2.3, for $1<p<\infty $, that the Neumann
realization $A_{B }$ of $A$ in $L_p(\Omega )$ with domain $
D_p(A_{B })=H^2_{p,B }(\Omega )$
admits a bounded $H^\infty $-calculus in $L_p(\Omega )$. 
Then Th.\ 16.11 of \cite{Y10} implies that  the fractional powers
$(A_{B })^a$ in $L_p(\Omega )$ for $0<a<1$ have domains
\begin{equation}
D_p((A_{B})^a)=\begin{cases} H^{2a}_p(\Omega ) \text{ if }0\le 2a<1+\tfrac1p,\\ 
H^{2a}_{p,B}(\Omega ) \text{ if }1+\tfrac1p<2a\le 2.
\end{cases} \label{5.7}
\end{equation}
We can now extend the results in Section \ref{secDir} to this nonsmooth
situation, when $s\le 2-2a$, $\alpha \le 2-2a$. The proofs are the
same as there, only
used in the applicable range.

\begin{theorem}\label{Theorem5.4} 
Let $0<a<1$. Let $f\in H^{s}_{p}(\Omega )$ for
some $s\in [0, 2-2a]$, and assume that $u\in D_p((A_{B })^a)$ is a solution of
\begin{equation}
(A_{B })^au=f.\label{5.8}
\end{equation}
Assume that $s$ and $s+2a$ are different from $1+\frac1p$.

$1^\circ$ If $s<1+\frac1p$, then $u\in H^{s+2a}_{p,B }(\Omega )$.

$2^\circ$ Let $1+\frac1p<s\le 2-2a$. Then $u\in H^{1+\frac1p+2a-\varepsilon
}_{p,B }(\Omega )$ for all $\varepsilon >0$. Moreover,  $u\in
H^{s+2a}_{p}(\Omega )$ if and only if $Bf=0$, and then in fact $u\in  H^{s+2a}_{p,B
}(\Omega )$.
\end{theorem}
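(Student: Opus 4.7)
The plan is to copy the template of Theorem~\ref{Theorem5.2} verbatim, replacing the Dirichlet threshold $\tfrac1p$ with the Neumann threshold $1+\tfrac1p$ and invoking (\ref{5.6})--(\ref{5.7}) in place of the Dirichlet counterparts. The only ingredients needed beyond arithmetic are: (a) the abstract isomorphism
\[
(A_{B})^{a}\colon D_p((A_{B})^{t+a}) \simto D_p((A_{B})^{t}), \quad t\ge 0,
\]
which holds for the $H^\infty$-calculus generator $A_B$ by the same functional calculus reasoning as (\ref{5.5}); and (b) the Yagi-type identification (\ref{5.7}) of $D_p((A_B)^a)$ with the scale (\ref{5.6}) for $2a\in[0,2]\setminus\{1+\tfrac1p\}$. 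The hypothesis $s\in[0,2-2a]$ is precisely tailored so that both exponents $s/2$ and $s/2+a$ that will appear lie in $[0,1]$, keeping us inside the range where (\ref{5.7}) applies.

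For part $1^\circ$, I would read $s<1+\tfrac1p$ together with (\ref{5.7}) as $H^s_p(\Omega)=D_p((A_B)^{s/2})$, so that $u=(A_B)^{-a}f$ lies in $D_p((A_B)^{s/2+a})$. Since $s+2a\in[2a,2]$ and $s+2a\ne 1+\tfrac1p$ by hypothesis, (\ref{5.7}) combined with (\ref{5.6}) identifies that domain as $H^{s+2a}_{p,B}(\Omega)$ whether $s+2a$ sits below or above $1+\tfrac1p$.

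For part $2^\circ$, first apply $1^\circ$ to the weaker membership $f\in H^{1+\tfrac1p-\varepsilon}_p(\Omega)$, which is free since $s>1+\tfrac1p>1+\tfrac1p-\varepsilon$; this yields $u\in H^{1+\tfrac1p+2a-\varepsilon}_{p,B}(\Omega)$ for every $\varepsilon>0$. For the ``if'' direction: $Bf=0$ together with $s\in(1+\tfrac1p,2]$ puts $f$ in $H^s_{p,B}(\Omega)=D_p((A_B)^{s/2})$ by (\ref{5.6})--(\ref{5.7}), so $u=(A_B)^{-a}f\in D_p((A_B)^{s/2+a})=H^{s+2a}_{p,B}(\Omega)$, invoking (\ref{5.7}) once more at the exponent $s+2a\in(1+\tfrac1p,2]$. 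For the ``only if'' direction: take $\varepsilon<2a$, so the already established $u\in H^{1+\tfrac1p+2a-\varepsilon}_{p,B}(\Omega)$ forces $Bu=0$; combined with $u\in H^{s+2a}_p(\Omega)$ this gives $u\in H^{s+2a}_{p,B}(\Omega)=D_p((A_B)^{s/2+a})$ by (\ref{5.6})--(\ref{5.7}), and applying $(A_B)^a$ produces $f\in D_p((A_B)^{s/2})=H^s_{p,B}(\Omega)$, i.e., $Bf=0$.

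There is no genuine obstacle: the proof is pure bookkeeping, and the main point of vigilance is ensuring that every exponent encountered ($s$, $s+2a$, $1+\tfrac1p+2a-\varepsilon$, and the auxiliary $s'=1+\tfrac1p-\varepsilon$) lies inside the interval $[0,2]$ where (\ref{5.7}) applies and stays clear of the excluded value $1+\tfrac1p$. This is precisely why the theorem imposes $s\le 2-2a$ and both $s,s+2a\ne 1+\tfrac1p$; beyond that range one would need either an extension of Yagi's result to higher exponents (as flagged in the remark following Corollary~\ref{Corollary5.3}) or an independent argument for the endpoint $1+\tfrac1p$.
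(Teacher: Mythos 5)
Your proposal is correct and is exactly what the paper intends: the paper gives no separate proof of Theorem~\ref{Theorem5.4}, stating only that ``the proofs are the same as there, only used in the applicable range,'' i.e.\ the Theorem~\ref{Theorem5.2} argument with the threshold $\tfrac1p$ replaced by $1+\tfrac1p$ and \eqref{5.6}--\eqref{5.7} in place of \eqref{5.2}--\eqref{5.3}. Your bookkeeping of the exponent ranges (all of $s$, $s+2a$, $1+\tfrac1p+2a-\varepsilon$ staying in $[0,2]$ and away from $1+\tfrac1p$) is the right point of vigilance and is handled correctly.
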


Here $2^\circ$ is only relevant when $a<\frac12 - \frac1{2p}$.

\begin{corollary}\label{Corollary5.5} Let $u$ be a solution of {\rm \eqref{5.8}}.

$1^\circ$ 
Let $f\in L_p(\Omega )$ with $\frac n p <2a$, $2a\ne 1+\frac1p$. If $2a-\frac n p \ne 1$, 
resp.\ $=1$, then
the solution $u$ of {\rm \eqref{5.8}} is in $C^{2a-\frac np}(\comega)$, resp.\
$C^1_*(\comega)$, with $B
u=0$ if $2a-\frac n p>1$.

$2^\circ$ If $f\in L_\infty (\Omega )$, then $u\in C^{2a-0}(\comega)$,
with $B u=0$ precisely when  $a>\frac12$.

$3^\circ$ If  $f\in C^\alpha (\comega)$ 
for some
$\alpha \in \,]0, 2-2a]$, with $B f=0$ if $\alpha >1$, then $u\in C^{\alpha +2a-0}(\comega)$,
with $B u=0$ if $\alpha +2a>1$.
\end{corollary}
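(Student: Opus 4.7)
The plan is to mirror the strategy used for Corollaries \ref{Corollary3.5} and \ref{Corollary3.6} — passing from Sobolev regularity via Theorem \ref{Theorem5.4} to H\"older regularity via Sobolev embedding, and then letting $p\to\infty$ in order to handle $L_\infty$ and $C^\alpha$ data. The Neumann-type space \eqref{5.6} replaces the Dirichlet space \eqref{5.2}, so the cutoff $\tfrac{1}{p}$ is replaced by $1+\tfrac{1}{p}$; this shifts the threshold for the boundary condition from ``$>\tfrac{1}{p}$'' to ``$>1+\tfrac{1}{p}$'', which is exactly why $Bu=0$ is forced only when $2a-\tfrac{n}{p}>1$ in $1^\circ$, when $a>\tfrac12$ in $2^\circ$, and when $\alpha+2a>1$ in $3^\circ$.

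For $1^\circ$, observe that $f\in L_p(\Omega)=H^0_p(\Omega)=H^0_{p,B}(\Omega)$ since $0<1+\tfrac{1}{p}$. Theorem \ref{Theorem5.4}$1^\circ$ with $s=0$ gives $u\in H^{2a}_{p,B}(\Omega)$, and Sobolev embedding yields $u\in C^{2a-n/p}(\comega)$ (or $C^1_*(\comega)$ in the borderline case $2a-\tfrac{n}{p}=1$). When $2a-\tfrac{n}{p}>1$ we have a fortiori $2a>1+\tfrac{1}{p}$, so $u\in H^{2a}_{p,B}(\Omega)$ already encodes $Bu=0$, which persists under the embedding.

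For $2^\circ$, $f\in L_\infty(\Omega)\subset L_p(\Omega)$ for every $1<p<\infty$, and $1^\circ$ applied for all such $p$ followed by $p\to\infty$ gives $u\in C^{2a-0}(\comega)$. If $a>\tfrac12$ then $2a>1$, so for $p$ sufficiently large $2a>1+\tfrac{1}{p}$ and $Bu=0$ follows as above. Conversely, if $a\le\tfrac12$ then $2a\le 1<1+\tfrac{1}{p}$ for every $p$, so by \eqref{5.6} no boundary condition is ever imposed on $H^{2a}_{p,B}(\Omega)=H^{2a}_p(\Omega)$, and $Bu=0$ is not expected in general.

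For $3^\circ$, use $C^\alpha(\comega)\subset H^{\alpha-\varepsilon}_p(\Omega)$ for every $1<p<\infty$ and every small $\varepsilon>0$. Set $s:=\alpha-\varepsilon$, choosing $\varepsilon>0$ small enough that $s\le 2-2a$ and that neither $s$ nor $s+2a$ hits the excluded value $1+\tfrac{1}{p}$. If $\alpha\le 1$, then $s<1+\tfrac{1}{p}$ and Theorem \ref{Theorem5.4}$1^\circ$ gives $u\in H^{s+2a}_{p,B}(\Omega)$. If $\alpha>1$, take $p$ large enough that $1+\tfrac{1}{p}<s$; the hypothesis $Bf=0$ and \eqref{5.6} then place $f$ in $H^s_{p,B}(\Omega)$, and Theorem \ref{Theorem5.4}$2^\circ$ yields $u\in H^{s+2a}_{p,B}(\Omega)$. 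Sobolev embedding together with $p\to\infty$ and $\varepsilon\to 0$ delivers $u\in C^{\alpha+2a-0}(\comega)$. When $\alpha+2a>1$, choosing $p$ sufficiently large gives $s+2a>1+\tfrac{1}{p}$, so the $H^{s+2a}_{p,B}$-membership encodes $Bu=0$. The main obstacle — really only a bookkeeping nuisance — is simultaneously keeping $s$ and $s+2a$ away from the forbidden value $1+\tfrac{1}{p}$ while sending $\varepsilon\to 0$ and $p\to\infty$, and distinguishing the subcases $\alpha\le 1$ versus $\alpha>1$ (and $\alpha+2a\le 1$ versus $\alpha+2a>1$); once this is arranged the argument is purely mechanical.
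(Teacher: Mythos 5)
Your proposal is correct and follows exactly the route the paper intends: the paper gives no separate proof of Corollary \ref{Corollary5.5}, stating only that ``the proofs are the same as there, only used in the applicable range,'' i.e.\ one repeats Corollaries \ref{Corollary4.3} and \ref{Corollary4.4} with Theorem \ref{Theorem5.4} in place of Theorem \ref{Theorem4.1}, which is precisely what you do (Sobolev embedding from $H^{s+2a}_{p,B}(\Omega)$, the threshold $1+\tfrac1p$ from \eqref{5.6} governing when $Bu=0$ is encoded, and $p\to\infty$, $\varepsilon\to0$ for the $L_\infty$ and $C^\alpha$ cases). The bookkeeping around the excluded values $1+\tfrac1p$ that you flag is handled correctly by discarding the finitely many offending $p$.
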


For $2a=1+\frac1p$ in $1^\circ$ one gets $C^{2a-\frac np-0}(\comega)$
instead of $C^{2a-\frac np}(\comega)$.

\begin{remark}\label{Remark5.6} Whereas the results in Theorems \ref{Theorem5.2} and \ref{Theorem5.4} for general
$s$ are new, those in $1^\circ$ and $3^\circ$ of Corollaries \ref{Corollary5.3} and \ref{Corollary5.5} are
comparable to the results of Caffarelli and Stinga \cite{CS14}. The smoothness assumptions there
are up to 1 step weaker than ours. 
On the other hand, for $1^\circ$, the case  $2a=\frac np$ is not
addressed in \cite{CS14}, and the validity of the boundary conditions in
the standard sense for $u$ is not discussed. For $3^\circ$, our result
misses the best H\"older space for $u$ by an $\varepsilon $, but we
treat $f$ in the full range $\alpha\le 2-2a$, not assuming $\alpha <1$
on beforehand.
\end{remark}

One can moreover deduce results
in $L_2$ Sobolev spaces for more rough domains (Lipschitz or convex) from
\cite{Y10}.

\section{Overview of boundary problems associated with the fractional
 Laplacian}\label{secOver}

For the convenience of the reader, 
we here go through various boundary value problems associated with $(-\Delta
)^a$, $0<a<1$. For the problems considered in Sections 6.1 and 6.2, one can consider generalizations where $-\Delta $ is
replaced by a variable-coefficient second-order strongly elliptic differential
operator. More generally, one can replace $(-\Delta )^a$ by an
elliptic pseudodifferential operator $P$ of order $2a$ having the
so-called $a$-transmission property at $\partial\Omega $, cf.\
\cite{G15,G14}.

In much of the recent
literature,   $(-\Delta )^a$ is presented in the form
\begin{equation}
(-\Delta )^au(x)=c_{n,a}\operatorname{PV}\int_{{\mathbb R}^n}\frac{u(x)-u(y)}{|y|^{n+2a}}\, dy.\label{6.1}
\end{equation}
This is sometimes generalized by replacing $|y|^{-n-2a}$ by other
nonnegative functions
$K(y)$, satisfying $K(-y)=K(y)$ and  homogeneous of degree $-n-2a$.
(Cf.\ e.g.\ \cite{RS15a,RS15b} and their references; in the case where
$K$ is $C^\infty $ outside 0, this defines an operator of the type $P$ mentioned above.) More generally, $K$ can be 
subject to estimates comparing with $|y|^{-n-2a}$.

\subsection{The restricted Dirichlet and Neumann fractional
Laplacians}\label{resfrac} 

The 
 properties of the restricted Dirichlet fractional Laplacian $(-\Delta
)^a_{\operatorname{Dir}}$ defined in the introduction were studied
e.g.\ in
Blumenthal and Getoor \cite{BG59}, Landkof \cite{L72}, Hoh and Jacob
\cite{HJ96},  Kulczycki
\cite{K97}, Chen and Song \cite{CS98}, Jakubowski \cite{J02},
Silvestre \cite{S07},
Caffarelli and Silvestre \cite{CS09}, Frank and Geisinger
\cite{FG14}, 
Ros-Oton and Serra \cite{RS14,RS15a}, Felsinger, Kassmann and Voigt
\cite{FKV14}, Grubb
\cite{G14,G15},  Bonforte, Sire
and Vazquez \cite{BSV14}, Servadei and Valdinoci \cite{SV14}, Binlin,
Molica Bisci and Servadei \cite{BMS15}, and many
more papers referred to in these works (see in particular the list in
\cite{SV14}).

The operator acts like $r^+(-\Delta )^a$ applied to functions
supported in $\comega$.
The domain in $L_2(\Omega )$ is for $a<\frac12$  equal to $\dot
H^{2a}_2(\comega)$ (the $H^{2a}_2({\mathbb R}^n)$-functions supported in
$\comega $), and has for $a\ge \frac12$ been described in exact form in \cite{G14,G15} by
\begin{equation}
D_2((-\Delta )^a_{\operatorname{Dir}})=H_2^{a(2a)}(\comega)=\Lambda
_+^{(-a)}e^+\overline H^a_2(\Omega ).\label{6.2}
\end{equation}
 Here $\Lambda _+^{(\mu )}$ is a so-called order-reducing operator of
 order $\mu \in\mathbb C$ that preserves support in $\comega$, $e^+$
 extends by zero on ${\mathbb R}^n\setminus\Omega $, and $\overline
 H^s_p(\Omega )$ is the sharper notation for $H^s_p(\Omega )$ used in
 \cite{G14,G15}. H\"ormander's spaces $H_p^{\mu (s)}(\comega)$ are defined there
 in general by
\begin{equation}
H_p^{\mu (s)}(\comega)=\Lambda _+^{(-\mu )}e^+\overline
H^{s-\operatorname{Re}\mu }_p(\Omega ), \text{ for
}s-\operatorname{Re}\mu >-1+1/p.\label{6.3} 
\end{equation}

The operator $(-\Delta )^a_{\operatorname{Dir}}$ represents the {\it homogeneous}
Dirichlet problem, and there is an associated well-posed {\it
nonhomogeneous Dirichlet problem} defined on a larger space:
\begin{equation}
\begin{cases} r^+(-\Delta )^au& =f\text{ on }\Omega ,\\
\operatorname{supp}u&\subset \comega ,\\
\gamma _{a-1,0}u &=\varphi \text{ on }\partial\Omega  ,
\end{cases} \label{6.4}
\end{equation}
where $\gamma _{a-1,0}u=c_0(d^{1-a}u)|_{\partial\Omega }$ with
$d(x)=\operatorname{dist}(x,\partial\Omega )$. When $f\in
H^{s-2a}_p(\Omega )$, the solutions are in
spaces $H_p^{(a-1)(s)}(\comega)$, which allow a blowup of $u$ (of the
form $d^{a-1}$) at
$\partial\Omega $, see \cite{G14,G15} and also Abatangelo \cite{A14}. The solutions with
$\varphi =0$ are exactly the solutions of the homogeneous Dirichlet
problem, lying in  $H_p^{a(s)}(\comega)$ and behaving like $d^a$ at the
boundary. 

Likewise, one can define a well-posed {\it  nonhomogeneous Neumann
problem} (cf.\ \cite{G14})
\begin{equation}
\begin{cases} r^+(-\Delta )^au& =f\text{ on }\Omega ,\\
\operatorname{supp}u&\subset \comega ,\\
\gamma _{a-1,1}u
 &=\psi  \text{ on }\partial\Omega ,
\end{cases} \label{6.5} 
\end{equation}
where $\gamma _{a-1,1}u=c_1\partial_n(d(x)^{1-a}u)|_{\partial\Omega }$; it  has
solutions in $H_p^{(a-1)(s)}(\comega)$. There is then a {\it homogeneous
Neumann problem}, with $\psi =0$ in \eqref{6.5}; its solutions for $f\in
H^{s-2a}_p(\Omega )$ lie in a closed
subset of $H_p^{(a-1)(s)}(\comega)$.

These boundary conditions are {\it local}; one can also impose {\it nonlocal}
pseudodifferential boundary conditions prescribing $\gamma _0Bu$ with
a pseudodifferential operator $B$, see \cite{G14}, Section 4A.

The problems \eqref{6.4} and \eqref{6.5} are sometimes considered with the
condition $\operatorname{supp}u\subset \comega$ replaced by
prescription of
a nontrivial value $g$ of $u$ on ${\mathbb R}^n\setminus\comega$. It is
accounted for e.g.\ in \cite{G14} how such problems are reduced to the
case where $g=0$ as in \eqref{6.4}, \eqref{6.5}.

\subsection{The spectral Dirichlet and Neumann fractional Laplacians}
\label{specfrac}

Fractional powers of  realizations of the Laplacian and other elliptic
operators have been considered for many
years. In the case of a selfadjoint operator in $L_2(\Omega )$, there
is an operator-theoretical definition by spectral theory. More general,
not necessarily selfadjoint cases can be included,
when the powers are defined by a Dunford integral as in
\eqref{2.2}. Moreover, this representation allows a discussion of the
analytical structure. The
structure of powers of differential operators acting on a 
manifold without boundary, was cleared up
by Seeley \cite{S67}, who showed that they are classical
pseudodifferential operators. The case of realizations $A_B$ on a
manifold with boundary was described by Seeley in \cite{S71,S72},
based on \cite{S69}. The resulting operators $(A_B)^a$ have been further
analyzed in the book \cite{G96}, Section 4.4, from which follows that they are
sums of a truncated pseudodifferential term $r^+A^ae^+$ and a generalized singular
Green operator, having its importance at the boundary; here
$e^+$ denotes extension by zero (on ${\mathbb R}^n\setminus\Omega $). 
 (The detailed
analysis of the singular Green term is complicated.)
Fractional powers are of interest in differential geometry e.g.\ for the determination of topological
constants such as residues or indices.

The operators 
 have been considered more recently for questions arising in nonlinear PDE. 
Stinga and Torrea \cite{ST10}, Cabr\'e and Tan \cite{CT10} for $a=\frac12$, and
Caffarelli and Stinga \cite{CS14} for both $(-\Delta
_{\operatorname{Dir}})^a$ and $(-\Delta _{\operatorname{Neu}})^a$,
show how the spectral 
fractional Laplacians can be defined on a bounded domain by a
generalization of the 
Caffarelli-Silvestre extension \cite{CS07} to cylindrical situations.
The paper of Servadei and
 Valdinoci \cite{SV14}, which compares the eigenvalues of $(-\Delta
 _{\operatorname{Dir}})^a$ and $(-\Delta )^a _{\operatorname{Dir}}$,
 contains an extensive list of  references  to the recent literature,
 to which we 
 refer. See also Bonforte, Sire and Vazquez \cite{BSV14}, Capella,
 Davila, Dupaigne and Sire \cite{CDDS11}, and their references.

The regularity analyses of \cite{CT10,CS14} were preceded by that of
\cite{S71,S72} accounted for above.

It should be noted that the operators $(-\Delta )^a_{\operatorname{Dir}}$ and  $(-\Delta
_{\operatorname{Dir}})^a$ are both selfadjoint positive in $L_2(\Omega )$, but they
act differently, and their domains differ when $a\ge \frac12$.

For the spectral Dirichlet and Neumann fractional Laplacians there have
not been formulated nonhomogeneous boundary problems. In constrast,
the restricted  Dirichlet and Neumann fractional Laplacians  allow
nonhomogeneous boundary conditions.


\subsection{Two other Neumann cases} \label{otherNeu}

For completeness, we moreover mention two further choices of operators
associated with the fractional Laplacian and a set $\Omega $, namely operators defined 
from the sesquilinear forms
\begin{align}
p_0(u,v)&=\tfrac12 {c_{n,a}}\int_{\Omega \times\Omega
}\frac{(u(x)-u(y))(\bar v(x)-\bar v(y))}{|x-y|^{n+2a}}\, dxdy,\label{6.6}\\
p_1(u,v)&=\tfrac12 {c_{n,a}}\int_{{\mathbb R}^{2n}\setminus(\complement\Omega \times\complement\Omega
)}\frac{(u(x)-u(y))(\bar v(x)-\bar v(y))}{|x-y|^{n+2a}}\, dxdy.\nonumber
\end{align}

It is known that $(p_0(u,u)+\|u\|^2)^\frac12$ is equivalent with the norm
on $H^a_2(\Omega )$.
By a variational construction, $p_0$ with domain $H_2^a(\Omega )$ gives
rise to a selfadjoint operator $P_0$ in $L_2(\Omega )$, sometimes called ``the regional fractional
Laplacian''. To see how it acts, we note that one has from \eqref{6.1}, for suitable
functions $U,V$ on ${\mathbb R}^n$, 
\begin{equation*}
((-\Delta )^aU,V)_{{\mathbb R}^n}=\tfrac12 {c_{n,a}}\int_{{\mathbb R}^{2n}
}\frac{(U(x)-U(y))(\overline V(x)-\overline V(y))}{|x-y|^{n+2a}}\, dxdy
\end{equation*}
 (the factor $\frac12$ comes in since $V$ appears twice); hence for
 $u,v$ given on $\Omega $,
\begin{align*}
(&(-\Delta )^ae^+u,e^+v)_{{\mathbb R}^n}=\tfrac12 {c_{n,a}}\int_{{\mathbb R}^{2n}
}\frac{(e^+u(x)-e^+u(y))(e^+\bar v(x)-e^+\bar v(y))}{|x-y|^{n+2a}}\, dxdy\\
&=p_0(u,v)+\tfrac12 {c_{n,a}}\int_{x\in\Omega ,y\in\complement\Omega 
}\frac{u(x)\bar v(x)}{|x-y|^{n+2a}}\, dxdy+\tfrac12 {c_{n,a}}\int_{y\in\Omega ,x\in\complement\Omega 
}\frac{u(y)\bar v(y)}{|x-y|^{n+2a}}\, dxdy\\
&=p_0(u,v)+(wu,v)_{\Omega },\text{ where } w(x)=
c_{n,a}\int_{y\in\complement\Omega 
}\frac{1}{|x-y|^{n+2a}}\, dxdy.
\end{align*}
It follows that the operator $P_0$
acts like $u\mapsto r^+(-\Delta )^ae^+u-wu$; observe that the function
$w$ has a singularity at
$\partial\Omega $ (balancing the singularity of the first term).
This case appears e.g.\ in Lieb and Yau \cite{LY88}, 
Chen and Kim  \cite{CK02}, Bogdan, Burdzy and Chen
\cite{BBC03}. For
$\frac12<a<1$, it is shown in Guan \cite{G06} how $P_0$
represents a  Neumann condition $(d^{2-2a}\partial_nu)|_{\partial\Omega }=0$. Nonhomogeneous Neumann and Robin problems for the
regional fractional Laplacian are studied
in Warma \cite{W15}.

The other choice $p_1$ has recently been introduced in Dipierro, Ros-Oton and
Valdinoci  in \cite{DRV14} (formulated for real functions), where it is shown how it defines an
operator $r^+(-\Delta )^a$ applied to functions on ${\mathbb
R}^n$ satisfying a special condition viewed as a ``nonlocal Neumann
condition'', relating the behavior in ${\mathbb R}^n\setminus
\Omega $ to that in $\Omega $. Here one can also define  nonhomogeneous
nonlocal Neumann problems.

\begin{acknowledgement}
The author is grateful to H.\ Abels, J.\ Johnsen, X.\ Ros-Oton and A.\ Yagi for
useful discussions.
\end{acknowledgement}

\bibliographystyle{mn}

\providecommand{\WileyBibTextsc}{}
\let\textsc\WileyBibTextsc
\providecommand{\othercit}{}
\providecommand{\jr}[1]{#1}
\providecommand{\etal}{~et~al.}

\end{document}